\documentclass[11pt]{article}
\usepackage{amsfonts,amsmath,tikz}
\usepackage{epsfig}
\usepackage{latexsym}
\usepackage{color}
\usepackage{soul}  % za preèrtano besedilo
\usepackage{epsf,amssymb}

\usepackage{amsthm}
\usepackage{yfonts}
\usepackage{hyperref}
\usepackage{ifthen}
\usepackage{enumitem}

\usepackage{array}

\usepackage[table]{xcolor}
\definecolor{FormOneLimited}{RGB}{0,128,0}     % zelena
\definecolor{FormTotal}{RGB}{0,90,180}         % modra
\definecolor{FormCode}{RGB}{210,120,20}        % oranžna
\definecolor{GoalGray}{RGB}{230,230,230}

\usepackage{float} %1 te tri vrstice potrebne, da je caption nad tabelo
\floatstyle{plaintop}%2
\restylefloat{table} %3

\newtheorem{theorem}{Theorem}
\newtheorem{lemma}[theorem]{Lemma}
\newtheorem{observation}[theorem]{Observation}

\newtheorem{corollary}[theorem]{Corollary}
\newtheorem{proposition}[theorem]{Proposition}

\newtheorem{problem}[theorem]{Problem}

\def\vertex(#1){\put(#1){\circle*{2}}}
\def\vertexo(#1){\put(#1){\circle{2}}}
\def\vert(#1){\put(#1){\circle*{1.5}}}
\def\verto(#1){\put(#1){\circle{1.5}}}
\def\lab(#1)#2{\put(#1){\makebox(0,0)[c]{#2}}}
\definecolor{darkgreen}{RGB}{0,100,0}

\newcommand{\gt}{\gamma_t}

\newcommand{\gammakL}{\gamma_k^{\mathrm L}}
\newcommand{\gammaenaL}{\gamma_1^{\mathrm L}}

\tikzset{My Style/.style={draw, circle, fill=black,scale=0.5}} %za risanje grafov-stil vozlišè

\tikzset{My Style2/.style={draw, circle, fill=white,scale=0.5}} %za risanje grafov-stil vozlišè

\title{On $k$-limited domination: complexity and Cartesian products}

\author{
Aleksandra Tepeh \\
\small \it University of Maribor, FEECS, Koro\v ska cesta 46, 2000 Maribor, Slovenia \\
\small \it IMFM, Jadranska ulica 19, 1000 Ljubljana, Slovenia \\
\small \it University of Maribor, FNM, Koro\v ska cesta 160, 2000 Maribor, Slovenia\\
\small \tt aleksandra.tepeh@um.si \\
}
\date{}

\begin{document}
\maketitle

\begin{abstract}
A dominating set is called $k$-limited if every vertex in the set has at most $k$ neighbors outside it. The minimum cardinality of a $k$-limited dominating set is the $k$-limited domination number, denoted by $\gamma_k^{\mathrm{L}}(G)$. We prove that, for every fixed integer $k\ge 2$, deciding whether a graph admits a $k$-limited dominating set of size at most $\ell$ is $\mathsf{NP}$-complete. In addition, a systematic study of $k$-limited domination in Cartesian products is initiated. In particular, we establish general lower and upper bounds for $\gamma_k^{\mathrm{L}}(G\square H)$, show that both are sharp, and derive exact values for several natural families of graph products. Among others, we obtain exact results for rook graphs, Cartesian products of $k$-coronas, certain grid graphs, and several cases involving prisms and hypercubes.

\end{abstract}

\noindent
{\bf Keywords:} domination, $k$-limited domination, Cartesian products, hypercubes

\section{Introduction}

A dominating set of a graph $G$ is a set of vertices such that every vertex outside the set has a neighbor in it, and the minimum size of such a set is the domination number $\gamma(G)$. Over the years, domination has been studied from many different perspectives and on many graph classes; we refer to the monographs \cite{HHH1,HHH2,HHH3} for background and further references. At the same time, several central open problems in domination theory remain unresolved, among them Vizing's conjecture on domination in Cartesian products \cite{BB}, and the problem of determining the domination number of hypercubes, which is known exactly only in a limited number of cases \cite{HarLiv,Wee,Wu}.

Recently, in \cite{podgorica1}, the notion of $k$-limited domination was introduced as a natural variant of classical domination. Many previously studied domination variants impose additional requirements on vertices outside the dominating set; for instance, in \emph{$k$-domination} every vertex outside the set must have at least $k$ neighbors in it \cite{FinkJacobson}, while in \emph{$[1,k]$-domination} every vertex outside the set must have at least one and at most $k$ neighbors in it \cite{Chellali,Peterin}. By contrast, $k$-limited domination places the restriction on the vertices inside the dominating set. In the usual domination model, one seeks a dominating set of minimum cardinality, but no restriction is imposed on how many vertices outside the set may be assigned to the same dominator. In many situations, however, this is unrealistic: if the selected vertices represent facilities, service providers, or communication nodes, for example, then each of them can typically handle only a limited number of external users. This leads to the requirement that every chosen vertex have at most $k$ neighbors outside the dominating set. In this way, $k$-limited domination preserves the covering aspect of classical domination while adding a local restriction on the number of external vertices assigned to each dominating vertex.

Formally, as introduced in \cite{podgorica1}, let $G$ be a graph and let $k$ be an integer such that $0\le k\le \Delta(G)$. A set $D\subseteq V(G)$ is called a \emph{$k$-limited dominating set} if $D$ is a dominating set of $G$ and
$$
|N_G(u)\setminus D|\le k
\qquad\text{for every }u\in D.
$$
The minimum cardinality of a $k$-limited dominating set of $G$ is the \emph{$k$-limited domination number} of $G$, denoted by $\gammakL(G)$. We will refer to the above local degree condition as the \emph{$k$-limited constraint}. In \cite{podgorica1}, the parameter was defined in the range $0\le k\le \Delta(G)$, where the two extreme cases are essentially trivial: if $k=0$, then every vertex of a $k$-limited dominating set has no neighbors outside the set, which forces $\gamma^{\mathrm L}_0(G)=|V(G)|$, while for $k=\Delta(G)$ one recovers the classical domination number. Thus, for graphs with $\Delta(G)\ge 2$, the genuinely interesting range is $1\le k\le \Delta(G)-1$. As we shall see in Section~2, however, the definition extends naturally to all integers $k\ge 0$; in particular, for every $k\ge \Delta(G)$ one has $\gamma_k^{\mathrm L}(G)=\gamma(G)$. This extension will be convenient in our study of Cartesian products.

The initial study in \cite{podgorica1} laid the foundations for the systematic investigation of this parameter. It established general bounds on $\gammakL(G)$ and determined exact values for several basic graph families. It also showed that, for \emph{efficient graphs}, that is, graphs admitting a dominating set in which every vertex is dominated exactly once, the parameter coincides with the classical domination number once $k$ is sufficiently large, and that under suitable structural assumptions one can derive improved upper bounds in terms of $\gamma(G)$.
In addition, strong links with other domination-type parameters were uncovered. In particular, a close connection with \emph{$(1,t)$-domination}  was established, where every vertex outside the dominating set must have at least one neighbor in the set, while every vertex inside the set must have at least $t$ neighbors in the set \cite{fakhran,favaron}. For connected $d$-regular graphs and $1\le k<d$, this relation takes the precise form
$\gammakL(G)=\gamma_{1,d-k}(G)$.
Finally, the case $k=1$ was studied in more detail via its connection with the packing number, leading to further structural results and characterizations. 

The present paper continues this line of research in two directions. First, we investigate the algorithmic complexity of the parameter and prove in Section~\ref{sec:NP} that \textsc{$k$-Limited Dominating Set} is $\mathsf{NP}$-complete for every fixed integer $k\ge 2$. Second, we initiate a systematic study of $k$-limited domination in Cartesian products. Before turning to these two topics, in Section~\ref{prelim} we collect the definitions and auxiliary results needed later. The Cartesian product aspect is treated in Section~\ref{car}, where we establish general bounds for $\gammakL(G\square H)$, show that both are sharp, and derive exact values for several natural families of products, including rook graphs, Cartesian products of $k$-coronas, certain grid graphs, and selected cases involving prisms and hypercubes. We conclude in Section~\ref{sec:conclusion} with several open problems suggested by our results, particularly in the case of hypercubes.

\section{Preliminaries}
\label{prelim}

Let $G$ be a finite simple graph. Its vertex set and edge set are denoted by $V(G)$ and $E(G)$, respectively, and its order by $n(G)=|V(G)|$. For a vertex $v\in V(G)$, we write $N_G(v)$ for the open neighborhood of $v$ and $\deg_G(v)$ for its degree. The maximum degree of $G$ is denoted by $\Delta(G)$. When the graph is clear from the context, we omit the subscript. Also, for ease of notation, we write $[k]=\{1,\dots,k\}$ for every positive integer $k$.

A set $D\subseteq V(G)$ is a \emph{dominating set} of $G$ if every vertex of $V(G)\setminus D$ has a neighbor in $D$. The minimum cardinality of a dominating set of $G$ is the \emph{domination number} of $G$, denoted by $\gamma(G)$. Let $k$ be an integer such that $0\le k\le \Delta(G)$. A dominating set $D\subseteq V(G)$ is called a \emph{$k$-limited dominating set} if
$|N_G(u)\setminus D|\le k$ for every $u\in D$.
The minimum cardinality of a $k$-limited dominating set of $G$ is the \emph{$k$-limited domination number} of $G$, denoted by $\gammakL(G)$, \cite{podgorica1}.

If $k=0$, then every vertex of a $k$-limited dominating set has no neighbors outside the set, and hence $\gamma^{\mathrm L}_0(G)=|V(G)|$. On the other hand, if $k=\Delta(G)$, then every dominating set of $G$ is automatically $k$-limited, and therefore $\gammakL(G)=\gamma(G)$. Moreover, if $D$ is a $k$-limited dominating set of $G$, then it is also a $(k+1)$-limited dominating set of $G$. Indeed, for every $u\in D$, the inequality $|N_G(u)\setminus D|\le k$ immediately implies $|N_G(u)\setminus D|\le k+1$. Therefore
$\gamma^{\mathrm L}_{k+1}(G)\le \gamma^{\mathrm L}_k(G)$ for every $0\le k<\Delta(G)$.
Consequently, $\gammakL(G)$ admits a natural extension to all integers $k\ge 0$, and in particular $\gamma_k^{\mathrm L}(G)=\gamma(G)$ for every $k\ge \Delta(G)$. This also explains why in \cite{podgorica1} the focus was on the nontrivial range $1\le k\le \Delta(G)-1$.

\begin{observation}
\label{monotone}
Let $G$ be a graph. Then
$$
\gamma^{\mathrm L}_0(G)\ge \gamma^{\mathrm L}_1(G)\ge \gamma^{\mathrm L}_2(G)\ge \cdots \ge
\gamma^{\mathrm L}_{\Delta(G)}(G)=\gamma(G).
$$
Moreover, $\gamma_k^{\mathrm L}(G)=\gamma(G)$ for every $k\ge \Delta(G)$.
\end{observation}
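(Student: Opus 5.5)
The plan is to prove the chain by showing that consecutive terms compare correctly, using only the definition of a $k$-limited dominating set, and then to identify the last term.

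\emph{Monotonicity.} Fix an integer $k\ge 0$ and let $D$ be a minimum $k$-limited dominating set of $G$, so $|D|=\gamma^{\mathrm L}_k(G)$. Such a set exists because $V(G)$ is trivially $k$-limited dominating for every $k\ge 0$: $N_G(u)\setminus V(G)=\emptyset$. Then $D$ is dominating. For each $u\in D$ we have $|N_G(u)\setminus D|\le k\le k+1$, so $D$ is also a $(k+1)$-limited dominating set. Therefore $\gamma^{\mathrm L}_{k+1}(G)\le |D|=\gamma^{\mathrm L}_k(G)$, and chaining these inequalities for $k=0,1,\dots,\Delta(G)-1$ gives the displayed chain.

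\emph{Identification with $\gamma(G)$.} Take $k\ge\Delta(G)$ and an arbitrary dominating set $S$. For every $u\in S$,
\[
|N_G(u)\setminus S|\le \deg_G(u)\le\Delta(G)\le k,
\]
so $S$ automatically satisfies the $k$-limited constraint. Hence the $k$-limited dominating sets are exactly the dominating sets, and $\gamma^{\mathrm L}_k(G)=\gamma(G)$. Taking $k=\Delta(G)$ yields the final equality in the chain, and the general case $k\ge\Delta(G)$ gives the ``moreover'' part.

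There is no real obstacle here. The only point needing care is that the definition is being used for $k>\Delta(G)$, outside the original range $0\le k\le\Delta(G)$; since the defining condition makes sense for any integer $k\ge 0$, the argument above applies verbatim.
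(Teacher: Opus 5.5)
Your proof is correct and is essentially the paper's own argument: the paper likewise notes that a $k$-limited dominating set is automatically $(k+1)$-limited, giving $\gamma^{\mathrm L}_{k+1}(G)\le\gamma^{\mathrm L}_k(G)$, and that for $k\ge\Delta(G)$ every dominating set already satisfies the $k$-limited constraint, so $\gamma^{\mathrm L}_k(G)=\gamma(G)$. Your observation that $V(G)$ is always a $k$-limited dominating set, so a minimum exists, is a small detail the paper leaves implicit.
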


The following general bounds for $k$-limited domination were proved in \cite{podgorica1}.

\begin{theorem} 
\cite{podgorica1}
\label{bounds}
Let $G$ be a connected graph of order $n\geq 3$, and $1\leq k < \Delta(G)$. Then
$$
\max\left\{\gamma(G),\ \left\lceil \frac{n}{k+1} \right\rceil\right\} 
\leq \gammakL(G)  \leq 
n-k.
$$
\end{theorem}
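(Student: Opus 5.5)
Three inequalities are involved, and I will prove each separately, relying only on the definition of a $k$-limited dominating set.

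The bound $\gamma(G)\le \gammakL(G)$ is immediate, since every $k$-limited dominating set is in particular a dominating set (this is also contained in Observation~\ref{monotone}).

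For $\lceil n/(k+1)\rceil\le\gammakL(G)$ I will use a counting argument. Let $D$ be a minimum $k$-limited dominating set. Every vertex of $V(G)\setminus D$ has a neighbor in $D$, so
$$
n-|D| = |V(G)\setminus D| \le \sum_{u\in D}|N_G(u)\setminus D| \le k|D|.
$$
Hence $n\le (k+1)|D|$, and since $|D|$ is an integer we get $|D|\ge\lceil n/(k+1)\rceil$.

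The upper bound $\gammakL(G)\le n-k$ requires a construction: a set $S$ of exactly $k$ vertices such that $D=V(G)\setminus S$ is a $k$-limited dominating set. The $k$-limited constraint is automatic, because each $u\in D$ has at most $|S|=k$ neighbors outside $D$. The only real requirement is domination: every vertex of $S$ must have a neighbor in $D$. Equivalently, no vertex of $S$ may have all of its neighbors inside $S$. Connectivity and $n\ge3$ rule out isolated vertices, so it suffices to choose $S$ such that $G[S]$ contains no vertex $s$ with $N(s)\subseteq S$.

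My plan for choosing $S$ is to take a spanning tree $T$ of $G$ and let $S$ consist of the last $k$ vertices in a BFS (or DFS) ordering of $T$ from a root $r$. Each $s\in S$ other than $r$ has its parent, which comes earlier in the order and therefore lies outside $S$, provided $r\notin S$. Since $k<\Delta(G)\le n-1$, we have $k\le n-2$, so at least two vertices remain outside $S$ and $r$ is not among the last $k$. Consequently every vertex of $S$ has a neighbor (its parent) in $D$, and $D$ is a dominating set of size $n-k$.

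The main point needing care is this domination step. The ordering argument handles it cleanly, and the hypothesis $k<\Delta(G)$ is used only to guarantee $k\le n-2$, so that the root stays outside $S$ and $D$ is nonempty.
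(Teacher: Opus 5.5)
Your two lower bounds are correct. The inequality $\gamma(G)\le\gammakL(G)$ is immediate, and the count $n-|D|\le\sum_{u\in D}|N_G(u)\setminus D|\le k|D|$ is the same argument the paper uses for the lower bound in Theorem~\ref{CP-bounds}. Your reduction of the upper bound is also right: any $D=V(G)\setminus S$ with $|S|=k$ automatically satisfies the $k$-limited constraint, so you only need a $k$-set $S$ each of whose vertices has a neighbor outside $S$.

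The gap is in how you build $S$. You assert that the parent of each $s\in S$ ``comes earlier in the order and therefore lies outside $S$''. But preceding $s$ in the ordering does not stop the parent from also being among the last $k$ vertices; keeping the root out of $S$ is not the only issue. Concretely, let $G$ be the tree with edges $ca$, $cb$, $cp_1$, $p_1p_2$, $p_2p_3$, so $n=6$ and $\Delta(G)=3$, and take $k=2$. Every BFS ordering from $r=c$ ends with $p_2,p_3$. Then $S=\{p_2,p_3\}$ and $D=\{c,a,b,p_1\}$. The leaf $p_3$ has its only neighbor $p_2$ in $S$, so $D$ is not dominating. In general the construction fails whenever a vertex and all of its neighbors land in the last $k$ positions, and BFS/DFS orderings do not rule this out.

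The repair is short and uses the hypothesis $k<\Delta(G)$ where it naturally belongs. Choose a vertex $v$ with $\deg_G(v)=\Delta(G)\ge k+1$, and let $S$ be any $k$ of its neighbors. Then $v\notin S$, so $v\in D=V(G)\setminus S$ dominates every vertex of $S$. The $k$-limited constraint holds by your earlier observation, which gives $\gammakL(G)\le n-k$. This step only needs $k\le\Delta(G)$; connectivity and $n\ge 3$ play no role in it.
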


Several graph families were also considered in \cite{podgorica1}. In what follows, we will use the $k$-limited domination numbers of paths and complete graphs.

\begin{proposition} \cite{podgorica1}
\label{families}
Let $k$ be a positive integer. Then the following holds.
\begin{enumerate}[label=(\roman*)]
\item For every $n\geq 3$ we have $\gammaenaL(P_n) = \lceil\frac n2\rceil$.
\item\label{polni} For every $n\geq 3$ and $k$ such that $1\leq k<n$ we have $\gammakL(K_n) = n - k$.
\end{enumerate}
\end{proposition}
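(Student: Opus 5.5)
For part~(i) I would get the lower bound from Theorem~\ref{bounds} and the upper bound from an explicit construction. Since $P_n$ is connected, $n\ge 3$ and $1=k<\Delta(P_n)=2$, Theorem~\ref{bounds} gives $\gammaenaL(P_n)\ge \lceil n/2\rceil$. Label the path $v_1v_2\cdots v_n$. I would then build a $1$-limited dominating set of size $\lceil n/2\rceil$ from the repeating block ``out, in, in, out''. In this block each chosen vertex has one chosen neighbor, so it has at most one neighbor outside $D$. Each unchosen vertex is adjacent to a chosen one.

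Concretely, the construction depends on $n \bmod 4$. Write $n=4m+r$.
\begin{itemize}
\item If $r=0$: take $D=\{v_i : i\equiv 2,3 \pmod 4\}$, of size $2m$.
\item If $r=1$: take the same set together with $v_n$. The vertex $v_n$ has its only neighbor $v_{n-1}$ outside $D$, which is allowed. The size is $2m+1$.
\item If $r=2$: take $D=\{v_1\}\cup\{v_i : i\ge 4,\ i\equiv 0,1 \pmod 4\}$. Here $v_2$ is dominated by $v_1$, $v_3$ by $v_4$, and $v_n=v_{4m+2}$ by $v_{4m+1}$. The size is $2m+1$.
\item If $r=3$: take the set from the case $r=2$ (with $n=4m+3$) together with $v_n$. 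The vertex $v_{4m+2}$ is dominated by $v_{4m+1}$. The vertex $v_n$ has only the outside neighbor $v_{4m+2}$. The size is $2m+2$.
\end{itemize}
In each case the size is $\lceil n/2\rceil$. Checking the $1$-limited constraint is routine: every chosen vertex either lies in a consecutive pair of chosen vertices, or is an endpoint of the path with exactly one (outside) neighbor.

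For part~(ii), the upper bound $\gammakL(K_n)\le n-k$ is already given by Theorem~\ref{bounds}. To verify it directly, any set $D$ of $n-k\ge 1$ vertices dominates $K_n$, and each $u\in D$ has exactly $k$ neighbors outside $D$. For the lower bound, let $D$ be a $k$-limited dominating set with $|D|=d\ge 1$. Any $u\in D$ is adjacent to all $n-d$ vertices outside $D$, so the constraint $n-d\le k$ forces $d\ge n-k$.

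The only step needing care is the residue bookkeeping in part~(i), particularly the case $n\equiv 2\pmod 4$. There the naive pattern would either leave $v_n$ undominated or, if we patch it by adding a single vertex, give that vertex two outside neighbors. Shifting the pattern so that it starts with $v_1\in D$ resolves this.
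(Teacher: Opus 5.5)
Your proof is correct and complete. The paper contains no proof to compare against, since it imports this proposition from \cite{podgorica1}. Your lower bounds and explicit constructions are exactly what a self-contained verification needs: the bound $\lceil n/2\rceil$ for paths, and in $K_n$ the fact that every $u\in D$ is adjacent to all $n-|D|$ outside vertices. I checked all four residue cases in (i), including the small case $n=3$, and each set is a $1$-limited dominating set of size $\lceil n/2\rceil$.

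There are three minor points.

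\begin{itemize}
\item Your closing remark about $n\equiv 2\pmod 4$ is mistaken. The unshifted set $D=\{v_i : i\equiv 2,3 \pmod 4\}$ already works whenever $n\not\equiv 1\pmod 4$.
  \begin{itemize}
  \item For $n\equiv 2\pmod 4$, the endpoint $v_n$ is itself in $D$, and its only neighbor $v_{n-1}$ lies outside $D$.
  \item For $n\equiv 3\pmod 4$, both $v_{n-1}$ and $v_n$ lie in $D$.
  \item Only $n\equiv 1\pmod 4$ needs the patch $D\cup\{v_n\}$.
  \end{itemize}
  So your four cases collapse to two. Your shifted constructions for $r=2,3$ are nonetheless valid.

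\item In (ii), Theorem~\ref{bounds} requires $k<\Delta(K_n)=n-1$, so it does not supply the upper bound when $k=n-1$. Your direct verification covers every $1\le k<n$, so nothing is lost.

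\item You can make (i) depend only on results proved in this paper. The counting argument in the proof of Theorem~\ref{CP-bounds} uses nothing about the product structure. It gives $\gammakL(G)\ge\lceil n(G)/(k+1)\rceil$ for every graph $G$, with no connectivity or degree hypotheses.
\end{itemize}
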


As already mentioned, one of the main goals of this paper is to initiate a systematic study of $k$-limited domination in Cartesian products. We therefore conclude this section by recalling the following definition.
The \textit{Cartesian product} $G \square H$ has vertex set
$V(G \square H) = V(G) \times V(H)$, where two vertices
$(g,h)$ and $(g',h')$ are adjacent if and only if either
$g=g'$ and $hh' \in E(H)$, or $h=h'$ and $gg' \in E(G)$. As usual, we write $n(G)=|V(G)|$ and $n(H)=|V(H)|$ for the orders of the respective factors.

%%%%%%%%%%%%%%%%%%%%%
\section{Computational Complexity}
\label{sec:NP}

In this section we investigate the algorithmic side of $k$-limited domination.
Although the notion is closely related to classical domination, the extra local restriction on the dominating vertices does not lead to an easier decision problem.
As announced in the introduction, we prove that the \textsc{$k$-Limited Dominating Set} problem, whose decision version is given in Table~\ref{tabela}, is $\mathsf{NP}$-complete for every fixed integer $k\ge 2$.
Throughout this section we work with an arbitrarily chosen integer $k\ge 2$, while $\ell$ is part of the input.

\begin{table}[h!]
\centering
\begin{tabular}{ |l| }
\hline
\textsc{Instance:} A graph $G$ and a positive integer $\ell$.\\
\hline
\textsc{Question:} Does $G$ admit a $k$-limited dominating set $D$ with $|D|\le \ell$?\\
\hline
\end{tabular}
\caption{\textsc{$k$-Limited Dominating Set}}
\label{tabela}
\end{table}

\smallskip
Given a set $D\subseteq V(G)$, one can check in polynomial time whether $D$ dominates $G$ and whether every vertex
$u\in D$ satisfies the $k$-limited constraint $|N_G(u)\setminus D|\le k$.
Hence, \textsc{$k$-Limited Dominating set} belongs to $\mathsf{NP}$.

To prove $\mathsf{NP}$-completeness, we give a polynomial reduction from the classical \textsc{Dominating Set} problem,
formulated in Table~\ref{tabela2}, which is known to be $\mathsf{NP}$-complete~\cite{garey}.

\begin{table}[h!]
\centering
\begin{tabular}{ |l| }
\hline
\textsc{Instance:} A graph $G$ and a positive integer $\ell$.\\
\hline
\textsc{Question:} Does $G$ have a dominating set $D$ with $|D|\le \ell$?\\
\hline
\end{tabular}
\caption{\textsc{Dominating Set}}
\label{tabela2}
\end{table}

Let $G$ be an arbitrary graph of order $n$ with vertex set $V(G)=\{v_1,\dots,v_n\}$ and $m=|E(G)|$.
From $G$ we construct a graph $G'$ by replacing each edge with a constant-size gadget $S$ that enforces a fixed minimum
contribution in any $k$-limited dominating set.

The gadget $S$ is obtained from the complete graph $K_k$ on vertex set $\{c,d_1,\dots,d_{k-1}\}$ by adding two new vertices
$a$ and $b$ together with the edges $ab$ and $bc$.
Furthermore, for each $t\in[k]$ we add two new vertices $x_t$ and $y_t$ and the edges $bx_t$ and $x_ty_t$.
No other edges are added.
In particular, $|V(S)|=3k+2$. Observe that the encircled subgraph $S^{ij}$ in Figure~\ref{Sgraf} is isomorphic to $S$.

We now define $G'$.
For each edge $v_iv_j\in E(G)$ we subdivide $v_iv_j$ once by a new vertex $c^{ij}$, that is, we delete $v_iv_j$ and add the edges
$v_ic^{ij}$ and $c^{ij}v_j$.
Next, for each edge $v_iv_j\in E(G)$ we take a copy $S^{ij}$ of $S$ and identify its vertex corresponding to $c$ with the subdivision vertex $c^{ij}$.
We denote the remaining vertices of $S^{ij}$ by $a^{ij}$, $b^{ij}$, $d^{ij}_1,\dots,d^{ij}_{k-1}$ and $x^{ij}_t,y^{ij}_t$ for $t\in[k]$, see Figure~\ref{Sgraf}.
Denote the resulting graph by $G'$.

Since the construction only replaces edges of $G$, each vertex of $G$ naturally remains a vertex of $G'$.
Thus we regard $V(G)$ as a subset of $V(G')$, and refer to its elements as the \emph{original vertices} of $G'$.
Clearly, $|V(G')|=n+(3k+2)m$, and the construction can be carried out in polynomial time.

%%%%%%%%%%%%%%%%%%%%%
\begin{figure}[ht!]
\begin{center}
\begin{tikzpicture}[scale=0.7]

\node [My Style, name=c, label=above left:$c^{ij}$ ]   at (0,0) {};		
\node [My Style, name=b, label=above left:$b^{ij}$]   at (0,2) {};	
\node [My Style, name=a, label=above left:$a^{ij}$]    at (0,4) {};	
\node [My Style, name=x1, label=above:$x^{ij}_1$]   at (1.5,2.8) {};	
\node [My Style, name=xk, label=below:$x^{ij}_k$ ]   at (1.5,1.2) {};	
\node [My Style, name=y1, label=above:$y^{ij}_1$]   at (2.8,2.8) {};	
\node [My Style, name=yk, label=below:$y^{ij}_k$]    at (2.8,1.2) {};	
\node [My Style, name=vi, label=above:$v_i$]   at (-5,0) {};	
\node [My Style, name=vj, label=above:$v_j$]   at (5,0) {};
\node at (2.15,2.1){$\vdots$};

\node [My Style, name=d1, label=below:$d^{ij}_1$ ]   at (-1.5,-1.5) {};	
\node [My Style, name=d2, label=below:$d^{ij}_2$ ]   at (-0.5,-1.5) {};	
\node [My Style, name=dk, label=below:$d^{ij}_{k-1}$ ]   at (1.5,-1.5) {};	
\node at (0.5,-1.5){$\dots$};
		
\draw[thick] (vi)--(c)--(vj);
\draw[thick] (c)--(b)--(a);
\draw[thick] (b)--(x1)--(y1);
\draw[thick] (b)--(xk)--(yk);
\draw[thick] (c)--(d1);
\draw[thick] (c)--(d2);
\draw[thick] (c)--(dk);

\draw[thick] (d1)--(d2);
\draw[thick, bend left] (d1) to (dk);
\draw[thick, bend left] (d2) to (dk);

\draw[rounded corners=10pt, thick] (-3,-3.2) rectangle (3.8,5);
%\draw[rounded corners=10pt, thick] (-2.5,-3) rectangle (2.5,-0.5);
\node[font=\large\bfseries] at (0.4,5.7){$S^{ij}$};

%\node [My Style, name=vio, label=above:$v_i$]   at (-11,0) {};
%\node [My Style, name=vjo, label=above:$v_j$]   at (-9,0) {};
%\draw[thick] (vio)--(vjo);

%\node[font=\huge] at (-7,0){$\longrightarrow$};

\end{tikzpicture}
\end{center}
\caption{The gadget $S$ placed between vertices $v_i$ and $v_j$ of  $G$.} 
\label{Sgraf}
\end{figure}
%%%%%%%%%%%%%%%

The next proposition states the correctness of the reduction.

\begin{proposition}
\label{NP:k>1}
Let $k\ge 2$ be a fixed integer, and let $G'$ be constructed from $G$ as described above.
Then $G$ has a dominating set of size at most $\ell$ if and only if $G'$ has a $k$-limited dominating set
of size at most $\ell+(k+2)m$.
\end{proposition}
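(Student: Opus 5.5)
The plan is to prove both directions directly, using one counting fact about the gadget: every $k$-limited dominating set of $G'$ must contain at least $k+2$ vertices from each copy $S^{ij}$ (with $c^{ij}$ counted as part of $S^{ij}$). The argument for this is as follows. Each leaf $y^{ij}_t$ forces a vertex of $\{x^{ij}_t,y^{ij}_t\}$, for $t\in[k]$. The leaf $a^{ij}$ forces a vertex of $\{a^{ij},b^{ij}\}$. The vertex $d^{ij}_1$ has all its neighbours in the clique $\{c^{ij},d^{ij}_1,\dots,d^{ij}_{k-1}\}$, so it forces a vertex of that clique. These $k+2$ sets are pairwise disjoint. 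Moreover, the gadgets for different edges are disjoint and avoid the original vertices.

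\emph{($\Rightarrow$)} Let $D_0$ be a dominating set of $G$ with $|D_0|\le\ell$. For every edge $v_iv_j$ we add $b^{ij}$ and $x^{ij}_1,\dots,x^{ij}_k$ to $D_0$. We also add $c^{ij}$ if $\{v_i,v_j\}\cap D_0\neq\emptyset$, and $d^{ij}_1$ otherwise. This adds exactly $k+2$ vertices per edge. We then check domination and the $k$-limited constraint vertex by vertex.
\begin{itemize}
\item $b^{ij}$ has only $a^{ij}$ and $c^{ij}$ outside the set, and $2\le k$.
\item Each $x^{ij}_t$ has at most one neighbour outside the set.
\item $c^{ij}$, when chosen, has outside neighbours among the $k-1$ vertices $d^{ij}_s$ and at most one of $v_i,v_j$, so at most $k$.
\item $d^{ij}_1$, when chosen, has at most $k-1$ outside neighbours.
\item Every original vertex $v\notin D_0$ has a $G$-neighbour $u\in D_0$. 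The subdivision vertex of the edge $uv$ was then chosen and dominates $v$.
\item All remaining gadget vertices are clearly dominated.
\end{itemize}

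\emph{($\Leftarrow$)} Let $D'$ be a $k$-limited dominating set with $|D'|\le \ell+(k+2)m$. Call an edge \emph{tight} if $S^{ij}$ contains exactly $k+2$ vertices of $D'$, and \emph{loose} otherwise. By the counting fact, the number of loose edges plus $|D'\cap V(G)|$ is at most $\ell$. Define
\[
D_0=(D'\cap V(G))\cup\{v_i : v_iv_j \text{ is loose}\},
\]
choosing one endpoint per loose edge, so that $|D_0|\le\ell$.

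The key step, and the one needing care, is the following claim: if an edge $v_iv_j$ is tight and $c^{ij}\in D'$, then at least one of $v_i,v_j$ lies in $D'$. In the tight case $D'$ takes exactly one vertex from each of the $k+2$ groups. Since $c^{ij}$ is the chosen clique vertex, all $k-1$ vertices $d^{ij}_s$ lie outside $D'$. If $a^{ij}$ was chosen rather than $b^{ij}$, then $b^{ij}$ is also outside, which already gives $k$ outside neighbours of $c^{ij}$. In that case both $v_i$ and $v_j$ must lie in $D'$. If $b^{ij}$ was chosen, then at most one of $v_i,v_j$ may be outside $D'$.

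Now let $v\in V(G)\setminus D_0$. In $G'$ the only neighbours of $v$ are subdivision vertices, so some $c^{ij}\in D'$ with $v\in\{v_i,v_j\}$ dominates it. If $v_iv_j$ is loose, the endpoint we added to $D_0$ is $v$ itself or its $G$-neighbour across this edge. A single added endpoint suffices for both ends, which is why choosing one endpoint per loose edge is enough. If $v_iv_j$ is tight, the claim puts an endpoint in $D'\cap V(G)\subseteq D_0$, and that endpoint is not $v$, so it is a $G$-neighbour of $v$. Hence $D_0$ dominates $G$.

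The main obstacle is this tight-gadget analysis, in particular checking that choosing $a^{ij}$ instead of $b^{ij}$ does not break the claim. The other potential difficulty is the accounting for loose gadgets, which is handled by charging one endpoint to each of them.
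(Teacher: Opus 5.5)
Your proof is correct. The forward direction matches the paper's construction, but your vertex-by-vertex check skips the original vertices $v\in D_0$. That check is immediate: every $c^{ij}$ incident with such a $v$ was put into the set, so $v$ has no neighbour outside it.

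For the converse you take a genuinely different and shorter route. The paper never reads off a dominating set of $G$ directly. Instead it proceeds iteratively:
\begin{itemize}
\item It picks a vertex $v_i$ not dominated by $D'\cap V(G)$ and a $c^{ij}\in D'$ with $v_j\notin D'$.
\item It shows $|D'\cap V(S^{ij})|\ge k+3$ by counting neighbours of $c^{ij}$: with $v_i,v_j$ outside, at least two of $b^{ij},d^{ij}_1,\dots,d^{ij}_{k-1}$ must be inside.
\item It replaces the part of $D'$ in every gadget at $v_i$ by the canonical set $\{c^{ir},b^{ir},x^{ir}_1,\dots,x^{ir}_k\}$ and adds $v_i$.
\item It must then re-verify domination and the $k$-limited constraint for the modified set, and argue that the iteration terminates.
\end{itemize}

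Your tight/loose charging uses the same key fact in contrapositive form: a tight gadget with $c^{ij}\in D'$ has an endpoint in $D'$. You prove it via the one-vertex-per-group structure of tight gadgets and the $a^{ij}$ versus $b^{ij}$ split. Each loose gadget is then paid for with a single endpoint, and $D'$ is never modified, so no re-verification or iteration is needed.

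The paper's route gives a slightly stronger by-product: a $k$-limited dominating set $\widehat D$ of $G'$ with $|\widehat D|\le|D'|$ whose trace on $V(G)$ already dominates $G$. For the proposition itself, your argument is the more economical one.
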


\begin{proof}
Assume that $G$ has a dominating set $D\subseteq V(G)$ with $|D|\le \ell$.
We define a set $D'\subseteq V(G')$ by taking $D$ together with a fixed selection of vertices from each gadget.
More precisely, for every edge $v_iv_j\in E(G)$ we add to $D'$ the vertices $b^{ij}$ and $x^{ij}_t$ for all $t\in[k]$.
In addition, we include $c^{ij}$ whenever $v_i\in D$ or $v_j\in D$, and otherwise we include $d^{ij}_1$.
Formally,
$$D' \;=\; D \;\cup\!\!\!\bigcup_{v_iv_j\in E(G)}\!\!\!\Bigl(\{b^{ij}\}\cup\{x^{ij}_t:t\in[k]\}\cup Z^{ij}\Bigr),$$
where $Z^{ij}=\{c^{ij}\}$ if $v_i\in D$ or $v_j\in D$, and $Z^{ij}=\{d^{ij}_1\}$ otherwise.
Each gadget contributes exactly $k+2$ vertices, and therefore
$$|D'|=|D|+(k+2)m\le \ell+(k+2)m.$$
We claim that $D'$ is a $k$-limited dominating set of $G'$.

Fix an edge $v_iv_j\in E(G)$ and consider the corresponding copy $S^{ij}$.
For every $t\in[k]$, the vertex $y^{ij}_t$ is adjacent to $x^{ij}_t\in D'$, and $a^{ij}$ is adjacent to $b^{ij}\in D'$.
Moreover, since $\{c^{ij},d^{ij}_1,\dots,d^{ij}_{k-1}\}$ induces a clique and we ensured that at least one of $c^{ij}$ and $d^{ij}_1$
belongs to $D'$, all other vertices in this clique are dominated as well.
Thus every vertex inside every gadget is dominated. Now let $v_i$ be an original vertex of $G'$ with $v_i\notin D'$. Since $D\subseteq D'$, the assumption $v_i\notin D'$ immediately implies $v_i\notin D$. Since $D$ is a dominating set of $G$, there exists a neighbor $v_j\in D$ such that $v_iv_j\in E(G)$.
Consider the subdivision vertex $c^{ij}$ corresponding to the edge $v_iv_j$.
By the definition of $D'$,  $v_j\in D$ implies that $c^{ij}\in D'$.
As $v_i$ is adjacent to $c^{ij}$ in $G'$, it follows that $v_i$ is dominated in $G'$.
This shows that $D'$ is a dominating set of $G'$.

It remains to verify the $k$-limited constraint. Let $u\in D'$. If $u$ is an original vertex $v_i\in D$, then all its neighbors in $G'$ are subdivision vertices $c^{ij}$, and each such $c^{ij}$ is included in $D'$ by construction, hence $|N_{G'}(u)\setminus D'|=0$.
If $u\in D'\cap V(S^{ij})$ is a gadget vertex different from $c^{ij}$, then $u\in\{x^{ij}_t,b^{ij},d^{ij}_1\}$.
In the first two cases, $u$ has at most two neighbors outside $D'$. In the third case, when $u=d^{ij}_1$, we have $\deg_{G'}(u)=k-1$.
Thus $|N_{G'}(u)\setminus D'|\le k$ holds for all such vertices. Finally, consider a subdivision vertex $c^{ij}\in D'$. It has degree $k+2$, and it is adjacent to $b^{ij}\in D'$ and to at least one of $v_i,v_j\in D\subseteq D'$ by the definition of $D'$.
Therefore $c^{ij}$ has at most $k$ neighbors outside $D'$.
This completes the proof that $D'$ is a $k$-limited dominating set of size at most $\ell+(k+2)m$.

\medskip

Conversely, let $D'$ be a $k$-limited dominating set of $G'$ with $|D'|\le \ell+(k+2)m$.
Define
$D_0= D'\cap V(G)$, that is, the set of original vertices of $G$
that belong to $D'$.

We first show that gadgets are designed so that each of them forces at least $k+2$ vertices in any $k$-limited dominating set of $G'$. 
Fix $v_iv_j\in E(G)$ and consider $S^{ij}$.
For each $t\in[k]$, the vertex $y^{ij}_t$ has the unique neighbor $x^{ij}_t$, hence
$\{x^{ij}_t,y^{ij}_t\}\cap D'\neq\emptyset$, contributing at least $k$ vertices from $V(S^{ij})$.
Also, $a^{ij}$ has the unique neighbor $b^{ij}$, so $\{a^{ij},b^{ij}\}\cap D'\neq\emptyset$, contributing one more vertex.
Finally, every $d^{ij}_s$, where $s\in [k-1]$, has neighbors only within the clique on $\{c^{ij},d^{ij}_1,\dots,d^{ij}_{k-1}\}$, so domination of
$\{d^{ij}_1,\dots,d^{ij}_{k-1}\}$ forces $D'$ to contain at least one vertex from this clique.
Therefore, for every edge $v_iv_j\in E(G)$, the corresponding copy $S^{ij}$ satisfies
$|D'\cap V(S^{ij})|\ge k+2$.

Applying this bound to each of the $m$ gadgets and summing the resulting inequalities yields
$$|D_0|
=|D'|-\sum_{v_iv_j\in E(G)}|D'\cap V(S^{ij})|
\le |D'|-(k+2)m
\le \ell.$$
Thus, $|D_0|\le \ell$. If $D_0$, seen as a subset of $G$, already dominates $G$, then we are done.

Assume therefore that $D_0$ is not a dominating set of $G$.
We will show that $D'$ can be modified iteratively into a $k$-limited dominating set $\widehat{D}$ of $G'$ with $|\widehat{D}|\le |D'|$ such that the set of original vertices of $G'$ contained in $\widehat{D}$ forms a dominating set of $G$.
Let
$$W=\{\,v\in V(G)\setminus D_0:\ N_G(v)\cap D_0=\emptyset\,\}$$
be the set of vertices not dominated by $D_0$ in $G$.
Since $D_0$ is not a dominating set of $G$, the set $W$ is nonempty.
Choose a vertex $v_i\in W$.

As $D'$ dominates $G'$, the original vertex $v_i$ has a neighbor in $D'$.
By construction of $G'$, the neighbors of $v_i$ in $G'$ are precisely the subdivision vertices $c^{ir}$ corresponding to edges $v_iv_r\in E(G)$.
Hence there exists a neighbor $v_j\in N_G(v_i)$ such that $c^{ij}\in D'$.
Since $v_i\in W$, no neighbor of $v_i$ in $G$ belongs to $D_0$.
In particular, the vertex $v_j$ chosen above does not belong to $D_0$.
As $v_j$ is an original vertex and $D_0=D'\cap V(G)$, it follows that $v_j\notin D'$.

We already know that $|D'\cap V(S^{ij})|\ge k+2$.
We now show that under the additional assumptions $c^{ij}\in D'$ and $v_i,v_j\notin D'$, one more gadget vertex is forced to be in $D'$, that is,
$|D'\cap V(S^{ij})|\ge k+3$.
For each $t\in[k]$, the leaf $y^{ij}_t$ must be dominated by $x^{ij}_t$ or by itself, so
$\{x^{ij}_t,y^{ij}_t\}\cap D'\neq\emptyset$, yielding at least $k$ vertices of $D'$ in $V(S^{ij})$.
Moreover, since $a^{ij}$ is adjacent only to $b^{ij}$, we have $\{a^{ij},b^{ij}\}\cap D'\neq\emptyset$.
It remains to show that $D'$ contains at least one vertex among $d^{ij}_1,\dots,d^{ij}_{k-1}$.
Recall that $c^{ij}\in D'$ and $v_i,v_j\notin D'$. Since $\deg_{G'}(c^{ij})=k+2$, the vertex $c^{ij}$ already has two neighbors outside $D'$,
namely $v_i$ and $v_j$. As $D'$ is $k$-limited, at least two vertices from
$\{b^{ij},d^{ij}_1,\dots,d^{ij}_{k-1}\}$ must belong to $D'$, which further implies that at least one of $d^{ij}_1,\dots,d^{ij}_{k-1}$ belongs to $D'$. Therefore
$|D'\cap V(S^{ij})|\ge k+3$.

We now adjust $D'$ locally around $v_i$ to obtain another $k$-limited dominating set $D''$ in which $v_i$ is selected.
For each edge $v_iv_r\in E(G)$, let
$$C^{ir}=\{c^{ir},\, b^{ir}\}\cup\{x^{ir}_t:\ t\in[k]\}.$$
Then $|C^{ir}|=k+2$, and $C^{ir}$ dominates $S^{ir}$, see Figure~\ref{Sir}. 
Now define
$$D'' = \Bigl(D'\setminus \bigcup_{v_iv_r\in E(G)} (D'\cap V(S^{ir}))\Bigr)\ \cup\ \{v_i\}\ \cup\ \bigcup_{v_iv_r\in E(G)} C^{ir}.$$
In other words, we replace the part of $D'$ inside each gadget incident with $v_i$ by the corresponding set $C^{ir}$, and then add $v_i$.

%%%%%%%%%%%%%%%%%%%%%
\begin{figure}[ht!]
\begin{center}
\begin{tikzpicture}[scale=0.7]

%%% LEVI $$$			
\node [My Style, name=c, label=above left:$c^{ij}$ ]   at (0,0) {};		
\node [My Style, name=b, label=above left:$b^{ij}$]    at (0,2) {};	
\node [My Style2, name=a, label=above left:$a^{ij}$]   at (0,4) {};	
\node [My Style, name=x1, label=above:$x^{ij}_1$]      at (1.5,2.8) {};	
\node [My Style, name=xk, label=below:$x^{ij}_k$ ]     at (1.5,1.2) {};	
\node [My Style2, name=y1, label=above:$y^{ij}_1$]     at (3.2,2.8) {};	
\node [My Style2, name=yk, label=below:$y^{ij}_k$]     at (3.2,1.2) {};	
\node [My Style2, name=vj, label=above:$v_j$]          at (-5,0) {};	
\node [My Style,  name=vi, label=above:$v_i$]          at (5,0) {};
\node at (2.35,2.1){$\vdots$};

\node [My Style2, name=d1, label=below:$d^{ij}_1$ ]    at (-1.5,-1.5) {};	
\node [My Style2, name=d2, label=below:$d^{ij}_2$ ]    at (-0.5,-1.5) {};	
\node [My Style2, name=dk, label=below:$d^{ij}_{k-1}$ ] at (1.5,-1.5) {};	
\node at (0.5,-1.5){$\dots$};
		
\draw[thick] (vi)--(c)--(vj);
\draw[thick] (c)--(b)--(a);
\draw[thick] (b)--(x1)--(y1);
\draw[thick] (b)--(xk)--(yk);
\draw[thick] (c)--(d1);
\draw[thick] (c)--(d2);
\draw[thick] (c)--(dk);

\draw[thick] (d1)--(d2);
\draw[thick, bend left] (d1) to (dk);
\draw[thick, bend left] (d2) to (dk);

\draw[rounded corners=10pt, thick] (-3.8,-3.5) rectangle (3.8,5);
\draw[rounded corners=10pt, thick] (-2.5,-3) rectangle (2.5,-0.5);
\node[font=\large\bfseries] at (0.4,5.7){$S^{ij}$};

%%% DESNI $$$
\node [My Style, name=cp, label=above left:$c^{ir}$ ]   at (10,0) {};		
\node [My Style, name=bp, label=above left:$b^{ir}$]    at (10,2) {};	
\node [My Style2, name=ap, label=above left:$a^{ir}$]   at (10,4) {};	
\node [My Style, name=x1p, label=above:$x^{ir}_1$]      at (11.5,2.8) {};	
\node [My Style, name=xkp, label=below:$x^{ir}_k$ ]     at (11.5,1.2) {};	
\node [My Style2, name=y1p, label=above:$y^{ir}_1$]     at (13.2,2.8) {};	
\node [My Style2, name=ykp, label=below:$y^{ir}_k$]     at (13.2,1.2) {};
\node [My Style2, name=vr, label=above:$v_r$]           at (15,0) {};
\node at (12.35,2.1){$\vdots$};

\node [My Style2, name=d1p, label=below:$d^{ir}_1$ ]    at (8.5,-1.5) {};	
\node [My Style2, name=d2p, label=below:$d^{ir}_2$ ]    at (9.5,-1.5) {};	
\node [My Style2, name=dkp, label=below:$d^{ir}_{k-1}$ ] at (11.5,-1.5) {};	
\node at (10.5,-1.5){$\dots$};
		
\draw[thick] (vi)--(cp)--(vr);
\draw[thick] (cp)--(bp)--(ap);
\draw[thick] (bp)--(x1p)--(y1p);
\draw[thick] (bp)--(xkp)--(ykp);
\draw[thick] (cp)--(d1p);
\draw[thick] (cp)--(d2p);
\draw[thick] (cp)--(dkp);

\draw[thick] (d1p)--(d2p);
\draw[thick, bend left] (d1p) to (dkp);
\draw[thick, bend left] (d2p) to (dkp);

\draw[rounded corners=10pt, thick] (6.2,-3.5) rectangle (13.8,5);
\draw[rounded corners=10pt, thick] (7.5,-3) rectangle (12.5,-0.5);
\node[font=\large\bfseries] at (10.4,5.7){$S^{ir}$};

\end{tikzpicture}
\end{center}
\caption{A local view of the modification: black vertices belong to $D''$.}
\label{Sir}
\end{figure}

We first show that $|D''|\le |D'|$.
For every neighbor $v_r$ of $v_i$, the lower bound proved above gives
$|D'\cap V(S^{ir})|\ge k+2=|C^{ir}|$,
so replacing $D'\cap V(S^{ir})$ by $C^{ir}$ does not increase the size.
Moreover, for the specific neighbor $v_j$ chosen above we have
$|D'\cap V(S^{ij})|\ge k+3,$
and hence on the gadget $S^{ij}$ the replacement decreases the size by at least $1$.
This compensates for the addition of $v_i$, and therefore
$|D''|\le |D'|$.

Next, we verify that $D''$ is still a $k$-limited dominating set of $G'$.
First, domination is preserved.
Each modified gadget $S^{ir}$ is dominated by $C^{ir}$, and since $c^{ir}\in C^{ir}$, both original vertices $v_i$ and $v_r$ remain dominated.
All other vertices of $V(G')\setminus D''$ lie outside the modified gadgets, and their neighborhoods are unchanged.
Hence these vertices remain dominated as well.
It remains to check that the $k$-limited constraint holds for every $u\in D''$.
The newly added original vertex $v_i$ has no neighbors outside $D''$, since all its neighbors in $G'$ are subdivision vertices $c^{ir}$ and all of them belong to $D''$ by construction.
Now let $u\in D''\cap V(S^{ir})$ for some edge $v_iv_r\in E(G)$.
If $u=c^{ir}$, then $\deg_{G'}(u)=k+2$ and both $b^{ir}$ and $v_i$ belong to $N_{G'}(u)\cap D''$, so $|N_{G'}(u)\setminus D''|\le k$.
For $u\in \{b^{ir}, x^{ir}_1, \ldots, x^{ir}_k\}$ we have $|N_{G'}(u)\setminus D''|=1< k$. For all remaining vertices of $D''$, the number of neighbors outside the set remains the same, so the $k$-limited constraint continues to hold, and $D''$ is again a $k$-limited dominating set of $G'$.

Finally, the modification ensures that the original vertex $v_i$ is now selected in $D''$, since
$D''\cap V(G)=D_0\cup\{v_i\}$.
Consequently, $v_i$ is no longer undominated in $G$.
%Moreover, we never remove original vertices from the set, the only change on $V(G)$ is the addition of $v_i$.
%Therefore the set of vertices not dominated in $G$ strictly decreases.

If the set of original vertices of contained in $D''$ does not yet dominate $G$, we repeat the same process.
Since at each step the set of undominated vertices in $G$ is strictly reduced and $V(G)$ is finite, the procedure terminates after finitely many steps.
Thus we obtain a $k$-limited dominating set $\widehat{D}$ of $G'$ with
$|\widehat{D}|\le |D'|$
such that the set of original vertices contained in $\widehat{D}$ forms a dominating set of $G$.

It remains to bound its size.
Since $\widehat{D}$ is again a $k$-limited dominating set of $G'$, the argument above applies to $\widehat{D}$ as well: every gadget contributes at least $k+2$ vertices to $\widehat{D}$.
Hence
$$|\widehat{D}\cap V(G)|
=|\widehat{D}|-\sum_{v_iv_j\in E(G)} |\widehat{D}\cap V(S^{ij})|
\le |\widehat{D}|-(k+2)m
\le |D'|-(k+2)m
\le \ell.$$
Therefore $\widehat{D}\cap V(G)$ is a dominating set of $G$ of size at most $\ell$.
This completes the proof.
\end{proof}

Proposition~\ref{NP:k>1} establishes the correctness of the reduction. Since the reduction is polynomial and \textsc{$k$-Limited Dominating set} belongs to $\mathsf{NP}$, we obtain the following.

\begin{theorem}
\label{prop:NPk}
For every fixed integer $k\ge 2$, the \textsc{$k$-Limited Dominating set} problem is $\mathsf{NP}$-complete.
\end{theorem}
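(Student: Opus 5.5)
The theorem follows from two ingredients that are already in place: membership in $\mathsf{NP}$, and a polynomial reduction from \textsc{Dominating Set} whose correctness is Proposition~\ref{NP:k>1}.

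For membership, I would take a candidate set $D$ with $|D|\le \ell$ as the certificate. The verifier checks two things. First, that every vertex outside $D$ has a neighbor in $D$. Second, that $|N_G(u)\setminus D|\le k$ for every $u\in D$. Both checks take $O(|V(G)|+|E(G)|)$ time, so the problem lies in $\mathsf{NP}$, as already noted before the construction.

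For hardness, I would fix $k\ge 2$ and start from an instance $(G,\ell)$ of \textsc{Dominating Set}, which is $\mathsf{NP}$-complete by \cite{garey}. I would build $G'$ by subdividing every edge $v_iv_j$ with $c^{ij}$ and attaching a copy $S^{ij}$ of the gadget $S$ at $c^{ij}$. The output instance is $(G',\ell+(k+2)m)$.

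The size bound is straightforward. $G'$ has $n+(3k+2)m$ vertices and $O(km)$ edges. Since $k$ is a fixed constant, this is polynomial in the input size, and the construction can be carried out in polynomial time. The new bound $\ell+(k+2)m$ is also computable in polynomial time.

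Proposition~\ref{NP:k>1} then states that $(G,\ell)$ is a yes-instance of \textsc{Dominating Set} if and only if $(G',\ell+(k+2)m)$ is a yes-instance of \textsc{$k$-Limited Dominating Set}. Hence this is a valid polynomial many-one reduction, and $\mathsf{NP}$-hardness follows.

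The real work lies in Proposition~\ref{NP:k>1}, specifically its converse direction. There one must show that a minimum $k$-limited dominating set can be normalized so that its original vertices dominate $G$. This is done by the local exchange argument, and it relies on the $k+3$ lower bound for a gadget whose $c^{ij}$ is selected while both endpoints are not.

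With that proposition assumed, the remaining point to watch is that $k$ is fixed, so the gadget size $3k+2$ is a constant. This keeps the reduction polynomial for each fixed $k\ge2$. The restriction $k\ge 2$ enters only through the gadget: the clique $K_k$ must contain at least one vertex $d_1$ besides $c$.
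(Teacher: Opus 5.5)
Your proposal is correct and follows the paper's proof. The paper likewise argues membership in $\mathsf{NP}$ by polynomial-time verification of a candidate set, and obtains hardness from the polynomial-size construction of $G'$ from an instance of \textsc{Dominating Set}, whose correctness is exactly Proposition~\ref{NP:k>1}.
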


In the case $k=2$, our reduction produces bipartite instances.
Indeed, subdividing every edge of $G$ once yields a bipartite graph, and the gadget $S$ for $k=2$ is bipartite as well.
Hence the graph $G'$ constructed by the reduction is bipartite.
Therefore, the same reduction shows that the hardness result extends to bipartite graphs.
Consequently, we obtain the following.

\begin{corollary}\label{cor:NP-bip-k2}
\textsc{$2$-Limited Dominating set} problem is $\mathsf{NP}$-complete even when restricted to bipartite graphs.
\end{corollary}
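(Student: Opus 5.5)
The plan is to reuse the reduction of Proposition~\ref{NP:k>1} verbatim for $k=2$. Membership in $\mathsf{NP}$ holds exactly as in the general case, since a certificate $D$ can be checked in polynomial time. For hardness, we start from an arbitrary instance $(G,\ell)$ of \textsc{Dominating Set}, build $G'$ with gadgets $S^{ij}$ for $k=2$, and set the target bound to $\ell+4m$. By Proposition~\ref{NP:k>1}, $(G,\ell)$ is a yes-instance if and only if $(G',\ell+4m)$ is a yes-instance of \textsc{$2$-Limited Dominating Set}. So the only remaining task is to show that $G'$ is always bipartite.

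To do this, we exhibit an explicit proper $2$-colouring of $G'$. Colour every original vertex $v_i$ with colour $0$ and every subdivision vertex $c^{ij}$ with colour $1$. Since each edge of $G$ has been replaced by the path $v_ic^{ij}v_j$, all edges between these vertices are properly coloured.

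Inside a gadget $S^{ij}$ with $k=2$, the clique $K_k$ is the single edge $c^{ij}d^{ij}_1$, so the gadget is a tree. It consists of the path $a^{ij}b^{ij}c^{ij}$, the pendant edge $c^{ij}d^{ij}_1$, and the two paths $b^{ij}x^{ij}_ty^{ij}_t$. We colour by distance parity from $c^{ij}$: the vertices $d^{ij}_1$, $b^{ij}$ and $y^{ij}_t$ get colour $0$, and $a^{ij}$, $x^{ij}_t$ get colour $1$ (with $c^{ij}$ already coloured $1$). Each gadget meets the rest of $G'$ only in $c^{ij}$, so these colourings are consistent with the colouring of the subdivided graph. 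Hence $G'$ is bipartite.

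There is no real obstacle here. The one point to check is that for $k=2$ the clique $\{c,d_1,\dots,d_{k-1}\}$ is just an edge and so creates no odd cycle. For $k\ge 3$ it would contain a triangle, which is why the corollary is stated only for $k=2$.
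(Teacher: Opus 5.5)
Your proposal is correct and follows the paper's argument. You reuse the reduction of Proposition~\ref{NP:k>1} with $k=2$ and note that $G'$ is bipartite, because the once-subdivided graph is bipartite and each $k=2$ gadget is a tree attached only at $c^{ij}$. Your explicit parity $2$-colouring simply spells out what the paper states in one line.
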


%%%%%%%%%%%%%%%%%%%%%%%%%%
\section{Cartesian product}
\label{car}

We next investigate how $k$-limited domination behaves under the Cartesian product of graphs.
The following theorem provides general lower and upper bounds for $\gammakL(G \square H)$.
We get the lower bound by a direct counting argument, while the upper bound is obtained by taking an optimal $k$-limited dominating set in one factor and repeating it in every layer corresponding to the other factor.

\begin{theorem}
\label{CP-bounds}
Let $G$ and $H$ graphs, and let $k\geq 1$ be an integer.
Then
$$
\left\lceil \frac{n(G)\, n(H)}{k+1} \right\rceil
\le
\gammakL(G \square H)
\le
\min\big\{\gammakL(G)\,n(H),\, \gammakL(H)\,n(G)\big\}.
$$
\end{theorem}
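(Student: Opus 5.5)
The plan is to prove the two inequalities separately. The lower bound follows from a double-counting argument that uses nothing about the product structure. The upper bound comes from lifting a $k$-limited dominating set of one factor to the whole product. Both arguments work for every integer $k\ge 1$, using the extension of $\gammakL$ to all $k\ge 0$ described in Section~\ref{prelim}, so no restriction such as $k<\Delta$ is needed.

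\emph{Lower bound.} Let $D$ be a minimum $k$-limited dominating set of $G\square H$. Every vertex of $V(G\square H)\setminus D$ has at least one neighbor in $D$. Hence the number of edges between $D$ and its complement is at least $|V(G\square H)\setminus D| = n(G)n(H)-|D|$. The $k$-limited constraint says each $u\in D$ is incident with at most $k$ such edges, so this number is at most $k|D|$. Combining the two estimates gives $n(G)n(H)\le (k+1)|D|$, and integrality yields the ceiling. This is the same counting that underlies the bound $\lceil n/(k+1)\rceil$ in Theorem~\ref{bounds}. Here it is applied directly to $G\square H$, so the connectivity and order hypotheses of that theorem are not needed.

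\emph{Upper bound.} By symmetry of the Cartesian product it suffices to show $\gammakL(G\square H)\le \gammakL(G)\,n(H)$. Let $D_G$ be a minimum $k$-limited dominating set of $G$ and set $D=D_G\times V(H)$, so that $|D|=\gammakL(G)\,n(H)$.

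For domination, take $(g,h)\notin D$. Then $g\notin D_G$, so $g$ has a neighbor $g'\in D_G$, and $(g',h)\in D$ is adjacent to $(g,h)$ within the $G$-layer through $h$.

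For the $k$-limited constraint, take $(u,h)\in D$. Its neighbors are of two kinds. The neighbors $(u,h')$ with $hh'\in E(H)$ all lie in $D$, because the whole $H$-fiber over $u$ is contained in $D$. The neighbors $(u',h)$ with $uu'\in E(G)$ lie outside $D$ exactly when $u'\notin D_G$. Therefore
$$|N_{G\square H}((u,h))\setminus D| = |N_G(u)\setminus D_G|\le k.$$

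There is no real obstacle in this proof. The only point needing care is noticing that the $H$-fiber neighbors contribute nothing to the outside count, which is precisely why the construction replicates $D_G$ across all layers.
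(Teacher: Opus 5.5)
Your proof is correct and follows the paper's: the upper bound uses exactly the same lifted set $D_G\times V(H)$, together with the same observation that neighbors inside the $H$-fiber never leave $D$. For the lower bound you count the edges between $D$ and its complement, while the paper covers $V(G\square H)$ by the sets $\{x\}\cup\bigl(N_{G\square H}(x)\setminus D\bigr)$ for $x\in D$; this is the same double count phrased differently.
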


\begin{proof}
Let $D$ be a $k$-limited dominating set of $G \square H$.
For each vertex $x\in D$, the set $\{x\}\cup \bigl(N_{G\square H}(x)\setminus D\bigr)$
has size at most $k+1$, since $|N_{G\square H}(x)\setminus D|\le k$ by the $k$-limited constraint.
Moreover, because $D$ dominates $G\square H$, every vertex of $G\square H$ belongs either to $D$ or to $N_{G\square H}(x)\setminus D$ for some $x\in D$.
Hence
$$
V(G\square H)\subseteq \bigcup_{x\in D}\Bigl(\{x\}\cup \bigl(N_{G\square H}(x)\setminus D\bigr)\Bigr),
$$
and therefore
$$
n(G)\, n(H)=|V(G\square H)|
\leq
\sum_{x\in D}(k+1)
=
(k+1)|D|,
$$
which yields the lower bound.

For the upper bound, let $D_G \subseteq V(G)$ be a $\gamma^{\mathrm L}_k(G)$-set.
Let
$D = D_G \times V(H) \subseteq V(G \square H)$.
We claim that $D$ is a $k$-limited dominating set of $G \square H$.

Let $(u,v)\in V(G \square H)\setminus D$ be  arbitrary. Then $u \notin D_G$. As $D_G$ is, in particular, a dominating set of $G$, there exists a
vertex $x \in D_G$ with $ux \in E(G)$. Then $(u,v)$ and $(x,v)$ are adjacent
in $G \square H$, and $(x,v) \in D$. Hence $D$ dominates $G \square H$. 

To verify the $k$-limited constraint now let $(u,v)$ be an arbitrary vertex in $D$. Then $u \in D_G$ and $v \in V(H)$.
There are two types of neighbors of $(u,v)$ in $G \square H$:
$(u',v)$ with $u'u \in E(G)$,
and $(u,v')$ with $vv' \in E(H)$.
Since $D = D_G \times V(H)$, for every $v' \in V(H)$ we have $(u,v') \in D$. Since $D=D_G\times V(H)$, every neighbor of second type also belongs to $D$.
Therefore
$N_{G\square H}((u,v))\setminus D
=
\{(u',v): u'u\in E(G),\ u'\notin D_G\},
$
and consequently
$
|N_{G\square H}((u,v))\setminus D|
=
|N_G(u)\setminus D_G|.
$
Because $D_G$ is a $k$-limited dominating set of $G$, we have
$|N_G(u)\setminus D_G|\le k$ for every $u\in D_G$.
Hence
$|N_{G\square H}((u,v))\setminus D|\le k$ for every $(u,v)\in D$.
Thus $D$ is a $k$-limited dominating set of $G\square H$, and so
$$
\gammakL(G\square H)
\le |D|
=|D_G|\,n(H)
=\gammakL(G)\, n(H).
$$
By symmetry, interchanging the roles of $G$ and $H$ yields
$
\gammakL(G\square H)\le \gammakL(H)\, n(G).
$
Taking the minimum of the two upper bounds completes the proof.
\end{proof}

In the subsequent subsections, we show that both bounds in Theorem~\ref{CP-bounds} are sharp, and we derive closed formulas for $\gammakL$ for several well-known families of graphs.

\subsection{Rook graphs}

To show that the upper bound in Theorem~\ref{CP-bounds} is tight, we now turn to Cartesian products of complete graphs, also known as \emph{rook graphs}.

It is easy to see that for every nontrivial connected graph $H$
there exists a $k$-limited dominating set in $K_{k+2}\square H$ of size $2n(H)$, 
namely the set $D=\{x_1,x_2\}\times V(H)$, where $x_1$ and $x_2$ are arbitrary vertices of $K_{k+2}$.
Consequently,
$\gammakL(K_{k+2}\square H)\le 2n(H)$.
Whether this bound is attained depends on the structure of $H$. For instance, if $H=K_{k+1}$, then, by Theorem~\ref{CP-bounds} and Proposition~\ref{families}, we have
$\gammakL(K_{k+2}\square H)\le k+2<2 n(H)$.
On the other hand, equality does hold in the special case when $H=K_{k+2}$, as shown in the following proposition.

\begin{proposition}
\label{rook}
Let $k$ be a positive integer. Then $\gammakL(K_{k+2} \square K_{k+2})=2(k+2)$.
\end{proposition}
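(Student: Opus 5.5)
The upper bound comes from the results already in hand, and the lower bound from a short row/column counting argument in $K_{k+2}\square K_{k+2}$. For the upper bound, Proposition~\ref{families}\ref{polni} with $n=k+2$ gives $\gammakL(K_{k+2})=(k+2)-k=2$. Theorem~\ref{CP-bounds} then yields $\gammakL(K_{k+2}\square K_{k+2})\le 2(k+2)$; explicitly, the set is $\{x_1,x_2\}\times V(K_{k+2})$, as noted before the proposition. Note that the counting lower bound of Theorem~\ref{CP-bounds} is useless here, since it gives only about $k+3$.

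For the lower bound, let $D$ be a $k$-limited dominating set. Index the vertices as $(i,j)$ with $i,j\in[k+2]$, and let $r_i$ and $c_j$ be the numbers of vertices of $D$ in row $i$ and column $j$. A vertex $(i,j)\in D$ has $k+1$ neighbors in its row and $k+1$ in its column. Of these $2k+2$ neighbors, at most $k$ lie outside $D$, so at least $k+2$ lie in $D$. Hence
$$(r_i-1)+(c_j-1)\ge k+2,\qquad\text{that is,}\qquad r_i+c_j\ge k+4 .$$
Since $r_i,c_j\le k+2$, this forces $r_i\ge 2$ and $c_j\ge 2$. So every row and every column that meets $D$ contains at least two vertices of $D$.

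Now suppose $|D|<2(k+2)$. Since each nonempty row carries at least two vertices of $D$, fewer than $k+2$ rows are nonempty, so some row $i_0$ is disjoint from $D$. By the same argument applied to columns, some column $j_0$ is disjoint from $D$. Then the vertex $(i_0,j_0)$ lies outside $D$, and all its neighbors lie in row $i_0$ or column $j_0$, hence outside $D$. This contradicts domination, so $|D|\ge 2(k+2)$.

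The only nontrivial step is to derive $r_i+c_j\ge k+4$ from the $k$-limited constraint and to see that it forces at least two vertices of $D$ in every occupied line. Once that is established, the pigeonhole argument producing an empty row and an empty column finishes the proof immediately.
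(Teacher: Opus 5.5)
Your proof is correct and follows essentially the same route as the paper: the upper bound comes from $\gammakL(K_{k+2})=2$ together with Theorem~\ref{CP-bounds}, and the lower bound from showing that no row or column contains exactly one vertex of $D$, then producing an empty row and an empty column whose common vertex is undominated. Two small differences are cosmetic. The paper gets the ``no lone vertex in a line'' fact directly, since such a vertex would have $k+1>k$ neighbors outside $D$ in that line, whereas you pass through $r_i+c_j\ge k+4$. Also, your aside undersells the counting bound, which actually gives $\lceil (k+2)^2/(k+1)\rceil = k+4$, though this does not affect the proof.
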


\begin{proof}
Let $m=k+2$. Then $\Delta(K_m)=m-1=k+1$, and hence $1\le k<\Delta(K_m)$. Thus $\gammakL(K_m)=m-k=2$ by Proposition~\ref{families}. Applying Theorem \ref{CP-bounds} with $G=H=K_m$ yields $$\gammakL(K_m \square K_m)\leq \gammakL (K_m) \, |V(K_m)|=2m=2(k+2).$$ 
To complete the proof it remains to show that 
$\gammakL(K_m \square K_m)\geq 2m$.

Let $D$ be a $\gammakL(K_m \square K_m)$-set, and let
$V(K_m \square K_m) = \{(i,j) : 1 \le i,j \le m\}$.
For each $j\in [m]$ we define the $j$th \emph{row}
$R_j = \{(i,j) : 1\le i\le m\}$,
and for each $i\in [m]$ the $i$th \emph{column}
$C_i = \{(i,j) : 1\le j\le m\}$.
For each $j\in [m]$ let $r_j = |D\cap R_j|$, and for each $i\in [m]$ let
$c_i = |D\cap C_i|$. Then
$$\sum_{j=1}^m r_j = |D| = \sum_{i=1}^m c_i.$$

We first note that no row and no column contains exactly one vertex of $D$, as the opposite case contradicts the $k$-limited constraint.
Thus $r_j \ne 1$ for all $j\in [m]$ and $c_i \ne 1$ for all $i\in[m]$.

Now suppose, to the contrary, that $|D|\le 2m-1$. Then
$$\frac{1}{m} \sum_{j=1}^m r_j = \frac{|D|}{m}
\le \frac{2m-1}{m} < 2,$$
hence there exists a row $R_{j_0}$ with $r_{j_0} \le 1$.
Since $r_{j_0} \ne 1$, we must have $r_{j_0} = 0$, that is, the
row $R_{j_0}$ contains no vertex in $D$.
Applying the same argument to the sum $\sum_{i=1}^m c_i$ shows that there exists a column $C_{i_0}$ with
no vertex in $D$.
But then the vertex $(i_0,j_0)$ has no neighbor in $D$, contradicting the
fact that $D$ is a dominating set of $K_m \square K_m$.

Therefore $|D|\ge 2m$. Combining this inequality with the upper bound
%\gammakL(K_m \square K_m)\le 2m$ 
derived above concludes the proof.
\end{proof}

\begin{corollary}
For every positive integer $k$, the upper bound in Theorem~\ref{CP-bounds} is attained.
\end{corollary}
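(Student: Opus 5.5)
The corollary follows by exhibiting, for each positive integer $k$, one pair $(G,H)$ at which the upper bound of Theorem~\ref{CP-bounds} is an equality. I take $G=H=K_{k+2}$. The plan is to compute the right-hand side of Theorem~\ref{CP-bounds} and check that it equals the value given by Proposition~\ref{rook}.

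First I evaluate the minimum in the upper bound. Because $G=H$, the two terms coincide, so the bound reduces to $\gammakL(K_{k+2})\,n(K_{k+2})$. We have $\Delta(K_{k+2})=k+1>k\ge 1$ and $n=k+2\ge 3$, so Proposition~\ref{families}\ref{polni} applies and gives $\gammakL(K_{k+2})=(k+2)-k=2$. The upper bound therefore equals $2(k+2)$.

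Second, Proposition~\ref{rook} states that $\gammakL(K_{k+2}\square K_{k+2})=2(k+2)$. This matches the bound, so equality holds for every $k\ge 1$.

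There is no real obstacle here: the work is already contained in Proposition~\ref{rook}, in particular its lower-bound counting over rows and columns. The only thing to verify is that the hypotheses of Proposition~\ref{families} hold, namely $n\ge 3$ and $1\le k<n$. Both are immediate since $k\ge1$ and $n=k+2$.
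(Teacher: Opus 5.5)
Your proposal is correct and follows the paper's argument: the corollary is read off from Proposition~\ref{rook}, where Theorem~\ref{CP-bounds} together with Proposition~\ref{families} gives the upper bound $\gammakL(K_{k+2})\,n(K_{k+2})=2(k+2)$, and the row/column counting shows that $K_{k+2}\square K_{k+2}$ attains it. Your check of the hypotheses $n=k+2\ge 3$ and $1\le k<n$ is the same one the paper makes.
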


\subsection{Cartesian products of $k$-coronas}

By considering Cartesian products of $k$-coronas we now show that the lower bound in Theorem~\ref{CP-bounds} is sharp as well.

Recall that the \emph{corona product} $G\circ H$ is obtained from one copy of $G$ and $n(G)$ disjoint copies of $H$ by joining each vertex of $G$ to all vertices in the corresponding copy of $H$.
In particular, $G\circ \overline{K_k}$ is obtained from $G$ by attaching $k$ leaves to each vertex of $G$, and we call this graph the \emph{$k$-corona} of $G$.

\begin{lemma}
\label{kl-k-corona}
Let $k\ge 1$ and let $F$ be a  graph. Let $G=F\circ  \overline{K_k}$. Then
$$
\gammakL(G)=\frac{n(G)}{k+1}.
$$
\end{lemma}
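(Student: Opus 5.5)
The plan is to show two matching inequalities. Write $n(G)=(k+1)n(F)$, so the claimed value is $n(F)$.

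For the upper bound, take $D=V(F)$, viewed as the set of non-leaf vertices of $G$. Every leaf is adjacent to its support vertex in $V(F)$, so $D$ dominates $G$. For $u\in V(F)$, the neighbors of $u$ outside $D$ are exactly its $k$ attached leaves, since all its other neighbors lie in $V(F)\subseteq D$. Hence $|N_G(u)\setminus D|=k$, the $k$-limited constraint holds, and $\gammakL(G)\le n(F)=n(G)/(k+1)$.

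For the lower bound, I would use the same counting argument as in the proof of Theorem~\ref{CP-bounds}, because it applies to any graph $G$. Let $D$ be a $k$-limited dominating set. The sets $\{x\}\cup(N_G(x)\setminus D)$, for $x\in D$, cover $V(G)$, and each has size at most $k+1$. Therefore $|D|\ge n(G)/(k+1)$. I prefer this to quoting Theorem~\ref{bounds}, whose hypotheses (connectedness, $n\ge 3$, $k<\Delta$) may fail here, for instance when $F$ is disconnected or edgeless. Since $n(G)/(k+1)=n(F)$ is an integer, no ceiling is needed.

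Combining the two bounds gives equality. The only point requiring care is not to invoke Theorem~\ref{bounds} under hypotheses it does not have. The direct covering argument avoids this and works for every $k\ge1$ and every graph $F$, including $k\ge\Delta(G)$ under the extended definition of $\gammakL$.
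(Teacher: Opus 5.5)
Your proof is correct. The upper bound is the same as the paper's, but your lower bound takes a different route.

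The paper partitions $V(G)$ into the blocks $B_u=\{u\}\cup L_u$ for $u\in V(F)$, where $L_u$ is the set of leaves attached to $u$. Each block must meet $D$, because the leaves in $L_u$ have $u$ as their only neighbor. That argument never uses the $k$-limited constraint, so it actually shows $\gamma(G)\ge n(F)$. Hence it yields the extra information $\gamma(G)=\gammakL(G)=n(F)$: for $k$-coronas, the limitation costs nothing beyond ordinary domination.

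Your covering argument uses the $k$-limited constraint essentially. It gives the hypothesis-free bound $\gammakL(G)\ge\lceil n(G)/(k+1)\rceil$ for every graph, and so shows directly that $k$-coronas attain the universal counting bound. This is exactly the mechanism behind the lower bound of Theorem~\ref{CP-bounds} used in Proposition~\ref{k-cor}, so your version ties the lemma and the proposition to a single counting principle.

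Your caution about Theorem~\ref{bounds} is justified: its hypotheses can fail here, for instance for edgeless $F$. Both your argument and the paper's avoid it.
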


\begin{proof}
Let $U=V(F)$, viewed as a subset of $V(G)$.
Since every vertex of $U$ has exactly $k$ pendant neighbors outside $U$, while all its remaining neighbors also belong to $U$, the set $U$ is a $k$-limited dominating set of $G$.
Hence
$
\gammakL(G)\le|U|=\frac{n(G)}{k+1}.
$

For the reverse inequality, let $D$ be a $\gammakL(G)$-set.
For each $u\in U$, let $L_u$ denote the set of the $k$ leaves adjacent to $u$, and set
$
B_u=\{u\}\cup L_u.
$
Then the sets $B_u$, for $u\in U$, form a partition of $V(G)$.
Moreover, for each $u\in U$, at least one vertex of $B_u$ must belong to $D$, since otherwise none of the leaves in $L_u$ would be dominated.
Therefore
$
|D|\ge |U|=\frac{n(G)}{k+1}.
$
Combining the two inequalities yields the result.
\end{proof}

Now Theorem~\ref{CP-bounds} immediately gives the following.

\begin{proposition}
\label{k-cor}
Let $k\ge 1$ and let $F$ and $F'$ be graphs. Let $G=F\circ \overline{K_k}$ and $H=F'\circ \overline{K_k}$. Then
$$
\gammakL(G\square H)=\frac{n(G)\,n(H)}{k+1}.
$$
\end{proposition}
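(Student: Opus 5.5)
The lower bound comes straight from Theorem~\ref{CP-bounds}, so we only need a matching upper bound, which we take from the same theorem combined with Lemma~\ref{kl-k-corona}. First, Theorem~\ref{CP-bounds} gives
$$
\gammakL(G\square H)\ \ge\ \left\lceil \frac{n(G)\,n(H)}{k+1}\right\rceil \ \ge\ \frac{n(G)\,n(H)}{k+1}.
$$
Second, the upper bound in Theorem~\ref{CP-bounds} together with Lemma~\ref{kl-k-corona} applied to $G=F\circ\overline{K_k}$ gives
$$
\gammakL(G\square H)\ \le\ \gammakL(G)\,n(H)=\frac{n(G)}{k+1}\,n(H).
$$
The two bounds coincide, which proves the equality. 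Note also that $n(G)=(k+1)n(F)$, so the quantity $n(G)n(H)/(k+1)$ is an integer and the ceiling plays no role.

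The only point needing care is that Theorem~\ref{CP-bounds} and Lemma~\ref{kl-k-corona} are stated for an arbitrary integer $k\ge 1$. They do not assume $k<\Delta$, so there are no hypotheses on $F$ or $F'$ to check. Indeed, Lemma~\ref{kl-k-corona} holds even when $F$ has no edges, since its proof uses only the pendant leaves.

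For concreteness, one can also describe the optimal set explicitly. It is $V(F)\times V(H)$, i.e.\ a copy of $V(F)$ in every $H$-layer. Each of its vertices has exactly $k$ neighbors outside the set, namely the leaves in its $G$-layer. Hence this set meets the counting bound with equality, each dominating vertex covering exactly $k$ outside vertices.

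There is no real obstacle here; the statement is an immediate corollary of results already proved.
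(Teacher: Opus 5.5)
Your proof is correct and follows essentially the paper's own argument: you plug Lemma~\ref{kl-k-corona} into the upper bound of Theorem~\ref{CP-bounds} and match it with that theorem's counting lower bound. One small wording slip in your side remark: the set $V(F)\times V(H)$ places a copy of $V(F)$ in each $G$-layer $G\times\{h\}$, not in each $H$-layer, though this does not affect the argument.
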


\begin{proof}
By Lemma~\ref{kl-k-corona},
$\gammakL(G)=\frac{n(G)}{k+1}$ and 
$\gammakL(H)=\frac{n(H)}{k+1}$.
Therefore the upper bound in Theorem~\ref{CP-bounds} gives
$
\gammakL(G\square H)\le \frac{n(G)\,n(H)}{k+1}$.
On the other hand, the lower bound in the same theorem and the identity
$n(G\square H)=n(G)\,n(H)
$ yield the reverse inequality.
\end{proof}

\begin{corollary}
The lower bound in Theorem~\ref{CP-bounds} is attained for every positive integer $k$.
\end{corollary}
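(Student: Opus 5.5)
The corollary asks for, for each positive integer $k$, some pair of graphs $G,H$ for which the lower bound of Theorem~\ref{CP-bounds} holds with equality. I plan to take $G$ and $H$ to be $k$-coronas and apply Proposition~\ref{k-cor}.

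First, I fix $k\ge 1$ and choose any graphs $F$ and $F'$. The simplest choice is $F=F'=K_1$, or any nontrivial graph if one wants connected factors with $k<\Delta$. I then set $G=F\circ\overline{K_k}$ and $H=F'\circ\overline{K_k}$.

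By construction, $n(G)=(k+1)n(F)$ and $n(H)=(k+1)n(F')$. Hence $n(G)\,n(H)$ is divisible by $k+1$, and the ceiling in the lower bound disappears:
$$
\left\lceil\frac{n(G)\,n(H)}{k+1}\right\rceil=\frac{n(G)\,n(H)}{k+1}.
$$

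Proposition~\ref{k-cor} states that $\gammakL(G\square H)$ equals exactly this value. So the lower bound in Theorem~\ref{CP-bounds} is attained for every positive integer $k$.

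There is no real obstacle here. The only point to check is the divisibility, so that the ceiling is harmless; this is immediate from the order of a $k$-corona.
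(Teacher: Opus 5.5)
Your proposal is correct and is essentially the paper's argument: the corollary follows directly from Proposition~\ref{k-cor} applied to $k$-coronas $G=F\circ\overline{K_k}$ and $H=F'\circ\overline{K_k}$. Your explicit check that $k+1$ divides $n(G)\,n(H)$, so the ceiling in the lower bound is exact, is the only step involved, and any choice of $F,F'$ (including $K_1$) works.
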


%%%%%%%%%%%%%%%%%%%%%%%%%%%%%%%%%
\subsection{Grid graphs}

Having established the sharpness of both bounds in Theorem~\ref{CP-bounds}, we now focus on more specific results for the case $k=1$.
We begin with \emph{grid graphs}, that is, Cartesian products of two paths, for which Theorem~\ref{CP-bounds} already yields an exact formula whenever at least one of the two paths has even order.

\begin{lemma}\label{grid-even}
Let $m,n\ge 1$ be integers, where at least one of them is even. Then
$$\gammaenaL(P_m \square P_n) = \frac{mn}{2}.$$
\end{lemma}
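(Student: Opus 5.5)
The plan is to squeeze $\gammaenaL(P_m\square P_n)$ between the two bounds of Theorem~\ref{CP-bounds} with $k=1$. The lower bound gives $\gammaenaL(P_m\square P_n)\ge \lceil mn/2\rceil = mn/2$, because $mn$ is even. So the work is in the upper bound, where I will show that $P_m\square P_n$ has a $1$-limited dominating set of size $mn/2$.

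By symmetry I will assume $m$ is even. I want to use the upper bound $\gammaenaL(P_m)\,n(P_n)$ from Theorem~\ref{CP-bounds}, so I need $\gammaenaL(P_m)=m/2$.
\begin{itemize}
\item For $m\ge 3$, Proposition~\ref{families}(i) gives $\gammaenaL(P_m)=\lceil m/2\rceil=m/2$.
\item For $m=2$, the theorem's hypothesis $1\le k<\Delta$ fails, but $\Delta(P_2)=1$. Observation~\ref{monotone} then gives $\gammaenaL(P_2)=\gamma(P_2)=1=m/2$. Directly, a single vertex of $P_2$ dominates it and has exactly one neighbor outside the set.
\end{itemize}
Multiplying by $n(P_n)=n$ gives $\gammaenaL(P_m\square P_n)\le mn/2$.

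The degenerate cases need a check, since Theorem~\ref{CP-bounds} is stated for $k\ge 1$ and general graphs. If $n=1$, then $P_m\square P_1\cong P_m$ and the claim reduces to the value just computed. If $m$ is odd and $n$ is even, swap the roles of the factors. The case $m=n=1$ cannot occur, because one of $m,n$ must be even.

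The only point needing care is the small case $P_2$. Theorem~\ref{bounds} and Proposition~\ref{families} assume order at least $3$, so I will verify that case by hand as above. Otherwise the argument is a direct combination of the two bounds.
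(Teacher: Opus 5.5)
Your proof is correct and matches the paper's argument: assuming $m$ even, the lower bound of Theorem~\ref{CP-bounds} gives $\lceil mn/2\rceil = mn/2$, and the upper bound $\gammaenaL(P_m)\,n = mn/2$ gives the reverse inequality. Your separate check that $\gammaenaL(P_2)=1$ is a small but useful addition, since Proposition~\ref{families}(i) only covers paths of order at least $3$, which the paper's proof does not address.
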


\begin{proof}
Without loss of generality, assume that $m$ is even. Then $mn$ is even, and the lower bound in Theorem~\ref{CP-bounds} gives
$$
\gammaenaL(P_m\square P_n)
\ge
\left\lceil \frac{n(P_m\square P_n)}{2} \right\rceil
=
\left\lceil \frac{mn}{2} \right\rceil
=
\frac{mn}{2}.
$$
On the other hand, Theorem~\ref{CP-bounds} and Proposition~\ref{families} yield
$$
\gammaenaL(P_m\square P_n)
\le
\min\left\{\frac{mn}{2},\left\lceil\frac{n}{2}\right\rceil m\right\}
=
\frac{mn}{2}.
$$
Hence equality holds.
\end{proof}

On the other hand, if $2\le m\le n$ are both odd, determining the $1$-limited domination number of $P_m\square P_n$ appears to be more challenging.
In this case, Theorem~\ref{CP-bounds} gives
$$
\frac{mn+1}{2}
\le
\gammaenaL(P_m\square P_n)
\le
\min\left\{\frac{(m+1)n}{2},\frac{(n+1)m}{2}\right\}
=
\frac{mn+m}{2}
=
m\Big\lceil \frac{n}{2} \Big\rceil,
$$
where the last equality uses the assumption $m\le n$.
We will show that when $m=3$, equality holds in the upper bound.
To this end, we first fix the notation.

For an integer $n\geq 2$ let $G_n = P_n\square P_3$. More precisely, let
$V(G_n) = \{a_i,c_i,b_i : 1\le i\le n\}$,
and
$E(G_n) = \{a_ic_i,\,c_ib_i : 1\le i\le n\}
          \,\cup\,
          \{x_ix_{i+1} : x\in\{a,c,b\},\,1\le i\le n-1\}.$
For $1\le i\le n$ we call
$T_i = \{a_i,c_i,b_i\}$
the \emph{$i$-th column} of $G_n$; it induces a copy of $P_3$.
Given a $1$-limited dominating set $D\subseteq V(G_n)$ we define, for
each column $T_i$,
$t_i = |D\cap T_i|\in\{0,1,2,3\}$.
We call $(t_1,\dots,t_n)$ the \emph{column weight sequence} of $D$, and denote it briefly by $(t_i)$.
A column $T_i$ is called \emph{empty} if $t_i=0$ and \emph{full} if $t_i=3$.

First we analyze possible column weight sequences of
$1$-limited dominating sets. 
We view consecutive subsequences of $(t_i)$ as \emph{patterns}, encoded by the corresponding strings over the alphabet $\{0,1,2,3\}$.
For example, if for some $j$ we have $(t_j,t_{j+1},t_{j+2})=(0,3,3)$, we say that $(t_i)$ contains the pattern $033$ at position $j$.

In addition to forbidden local patterns, one of the key observations, formalized in the following lemma, is that every column of weight $0$ or $1$ has a \emph{private neighbor} of weight $3$, that is, no two distinct columns of weight $0$ or $1$ share the same neighboring column of weight $3$.

\begin{lemma}\label{patterns}
Let $n\geq 3$ and let $D$ be a $1$-limited dominating set of $G_n$. Then the following holds.
\begin{enumerate}
\item[\textup{(a)}] None of the following patterns occurs in  $(t_i)$:
$$01,\ 10,\ 12,\ 21,\ 20,\ 02,\ 000,\ 111,\ 230,\ 032,\ 030,\ 130,\ 031,\ 131.$$
\item[\textup{(b)}]
Let $S=\{i\in [n]:\ t_i\in \{0,1\}\}$. Then for every $i\in S$ there exists $j\in [n]$ with $|i-j|=1$ and $t_j=3$.
Moreover, there is an injective map
$p:S\to [n]$
such that $|p(i)-i|=1$ and $t_{p(i)}=3$ for all $i\in S$.
\end{enumerate}
\end{lemma}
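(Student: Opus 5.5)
The plan is to reduce everything to a few local facts about a single column, and then read off both parts of the lemma.

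\textbf{Column types.} Fix a $1$-limited dominating set $D$. Every $u\in D$ has at most one neighbor outside $D$. Recall $T_i=\{a_i,c_i,b_i\}$ with $c_i$ in the middle.
\begin{itemize}
\item[(W1)] If $t_i=1$, the chosen vertex cannot be $c_i$, since then $a_i,b_i\notin D$ are two outside neighbors. Say it is $a_i$ (the case $b_i$ is symmetric). Then $c_i\notin D$ is already an outside neighbor of $a_i$, so $a_{i\pm1}\in D$ whenever these exist. Moreover $b_i$ must be dominated by $b_{i-1}$ or $b_{i+1}$.
\item[(W2)] If $t_i=2$ and $D\cap T_i=\{a_i,b_i\}$, then $a_{i\pm1},b_{i\pm1}\in D$. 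If $D\cap T_i=\{a_i,c_i\}$ (or symmetrically $\{c_i,b_i\}$), then $c_{i\pm1}\in D$.
\item[(W3)] If $t_i=3$ and a neighboring column has $x\notin D$ in some row, then the vertex of $T_i$ in that row already uses up its one allowed outside neighbor.
\end{itemize}

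\textbf{Part (a).} Each forbidden pattern is checked against (W1)--(W3).
\begin{itemize}
\item $01$, $10$: (W1) forces $a$ into the empty column.
\item $12$, $21$: by (W1) the weight-$1$ column $T_i$ contains only $a_i$, so $a_{i+1}\in D$.
  \begin{itemize}
  \item If $T_{i+1}\cap D=\{a_{i+1},b_{i+1}\}$, then $b_{i+1}$ has $c_{i+1}$ and $b_i$ outside $D$.
  \item If $T_{i+1}\cap D=\{a_{i+1},c_{i+1}\}$, then $c_{i+1}$ has $b_{i+1}$ and $c_i$ outside $D$.
  \end{itemize}
\item $20$, $02$: by (W2) the weight-$2$ column needs vertices in the empty column.
\item $000$: the middle vertex $c$ of the middle column is undominated.
\item $111$: by (W1) all three columns contain exactly their $a$-vertex, so the middle $b$-vertex is undominated.
\item $030$: every vertex of the full column has two outside neighbors.
\item $230$, $032$: the row missing from the weight-$2$ column gives a vertex of the full column with outside neighbors on both sides.
\item $130$, $031$, $131$: the same argument applies using row $c$, since a weight-$1$ column never contains its $c$-vertex.
\end{itemize}

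\textbf{Part (b).} Let $t_i=0$. By (a), no neighbor of $T_i$ has weight $1$ or $2$. The vertex $c_i$ must be dominated by $c_{i-1}$ or $c_{i+1}$, so that neighboring column is nonempty and hence has weight $3$.

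Let $t_i=1$ with $D\cap T_i=\{a_i\}$. By (a), its neighbors have weight $1$ or $3$. A weight-$1$ neighbor contains only its $a$-vertex (by (W1) applied to $a_{i\pm1}\in D$), so $b_i$ must be dominated by a weight-$3$ neighbor.

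If $T_i$ is an end column, the same argument works with the single neighbor. Here $n\ge3$ guarantees the patterns make sense. In the extreme situation where $T_i$ would need to be dominated entirely inside itself, that is already excluded by (a) and domination.

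\textbf{Injectivity.} For each $i\in S$, set $p(i)$ to be any $j$ with $|i-j|=1$ and $t_j=3$. Suppose $p(i)=p(i')$ for distinct $i,i'\in S$. Then $\{i,i'\}=\{j-1,j+1\}$ and $(t_{j-1},t_j,t_{j+1})\in\{0,1\}\times\{3\}\times\{0,1\}$. This is one of $030,031,130,131$, all excluded in (a). Hence $p$ is injective.

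The main obstacle I expect is the exhaustive case bookkeeping for the $t_i=2$ configurations (and their mirror images), together with the boundary columns $T_1,T_n$. The cleanest route is to state (W1)--(W3) once, carefully, including end columns.
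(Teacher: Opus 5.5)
Your proof is correct and follows the same plan as the paper. Part (a) is a local case analysis, which you actually carry out while the paper only calls it straightforward; part (b) restricts neighbor weights via $01,10,02,20,12,21$ and gets injectivity from the forbidden patterns $030,031,130,131$. The one difference is minor: you locate the weight-$3$ neighbor by asking which column dominates $c_i$ (when $t_i=0$) or $b_i$ (when $t_i=1$), which covers end columns uniformly and makes your separate, somewhat vague end-column paragraph unnecessary, whereas the paper invokes $000$ and $111$ for interior columns and treats $i\in\{1,n\}$ separately.
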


\begin{proof}
A straightforward case analysis on two or three consecutive columns shows that the configurations corresponding to the patterns listed in \textup{(a)} are incompatible with $1$-limited domination.
To prove \textup{(b)}, let $i\in S$, so $t_i\in\{0,1\}$. 

Assume first that $t_i=0$. By \textup{(a)}, none of the patterns $01,02,10,20$ can occur, and hence each neighbor of $T_i$ has weight either $0$ or $3$. 
If $1<i<n$ and neither neighboring column had weight $3$, then
$(t_{i-1},t_i,t_{i+1})=(0,0,0)$, contradicting the forbidden pattern $000$.
Thus in this case there exists a neighboring column of weight $3$.
The same conclusion holds when $i\in\{1,n\}$. Indeed, if $i=1$, then by \textup{(a)} neither $01$ nor $02$ can occur, so the only remaining possibilities are $t_2=0$ or $t_2=3$. The case $t_2=0$ is impossible, since then not all vertices of $T_1$ are dominated. Hence $t_2=3$. The case $i=n$ is symmetric.

Assume now that $t_i=1$. By \textup{(a)}, none of the patterns $10,01,12,21$ can occur, and hence each neighbor of $T_i$ has weight either $1$ or $3$. 
If $1<i<n$ and neither neighboring column had weight $3$, then
$(t_{i-1},t_i,t_{i+1})=(1,1,1)$, contradicting the forbidden pattern $111$.
Thus in this case there exists a neighboring column of weight $3$. 
It remains to consider the boundary cases. By symmetry, it suffices to treat $i=1$.
Suppose, to the contrary, that $t_1=t_2=1$.
Let $x$ be the unique vertex of $D\cap T_1$, and let $y$ be the unique vertex of $D\cap T_2$. 
First note that $x\neq c_1$, since otherwise $x$ would have both $a_1$ and $b_1$ as neighbors outside $D$, contradicting the $1$-limited condition.
Hence, by symmetry, we may assume that $x=a_1$.
Since $b_1$ must be dominated and $x$ does not dominate it, we must have $y=b_2$.
But then the vertex $y$ has both $b_1$ and $c_2$ as neighbors outside $D$, again contradicting the $1$-limited condition.
Therefore $t_2\ne 1$, and since \textup{(a)} excludes $t_2=2$, we must have $t_2=3$.
The case $i=n$ is symmetric. 
Thus, whenever $t_i\in\{0,1\}$, there exists a neighboring column of weight $3$.

Finally, we show that two distinct columns of weight $0$ or $1$ cannot share the same neighboring column of weight $3$. Indeed, if there exists $\ell$ such that
$t_\ell\in\{0,1\}$, $t_{\ell+1}=3$, and $t_{\ell+2}\in\{0,1\}$, then
$(t_\ell,t_{\ell+1},t_{\ell+2})$ is one of the forbidden patterns
$030,031,130,131$, contradicting \textup{(a)}.

Therefore, for each $i\in S$ we may choose a neighboring index $p(i)\in[n]$ with $|p(i)-i|=1$ and $t_{p(i)}=3$,
and the previous paragraph shows that this choice yields an injective map $p:S\to[n]$.
\end{proof}

\begin{proposition}
\label{glavni}
Let $n\geq 3$ be an odd integer. Then $\gammaenaL(G_n)\geq \frac{3}{2}n+\frac{3}{2}$.
\end{proposition}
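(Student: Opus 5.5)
The plan is a discharging (averaging) argument built on the injective map $p$ of Lemma~\ref{patterns}(b). The aim is to show that the average column weight is at least $3/2$, with a guaranteed surplus of at least $3/2$.

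Let $D$ be a $1$-limited dominating set of $G_n$ with column weight sequence $(t_i)$. Let $S=\{i:t_i\in\{0,1\}\}$ and $p:S\to[n]$ be as in Lemma~\ref{patterns}(b). Since $t_{p(i)}=3$, we have $p(i)\notin S$. Because $p$ is injective, the pairs $\{i,p(i)\}$ for $i\in S$ are pairwise disjoint.

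Introduce the following counts:
\begin{itemize}
\item $n_0,n_1,n_2$ are the numbers of columns of weight $0,1,2$, so $|S|=n_0+n_1$;
\item $u$ is the number of weight-$3$ columns not in $p(S)$.
\end{itemize}
Then $n=2|S|+n_2+u$. Moreover, each pair $\{i,p(i)\}$ contributes $t_i+3$, so
$$|D|=\sum_i t_i = 3|S|+n_1+2n_2+3u .$$
Subtracting $\frac32 n$ gives the key identity
$$|D|-\tfrac32 n = n_1+\tfrac12 n_2+\tfrac32 u .$$
Thus it suffices to show $n_1+\frac12 n_2+\frac32 u\ge \frac32$.

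Parity gives most of this. Since $n$ is odd and $2|S|$ is even, $n_2+u$ is odd, so $n_2+u\ge 1$.
\begin{itemize}
\item If $u\ge1$, we are done.
\item If $u=0$, then $n_2$ is odd. If $n_2\ge 3$, then $\frac12 n_2\ge\frac32$ and we are done.
\end{itemize}
The only remaining case is $u=0$, $n_2=1$, $n_1=0$. This is the step that needs the forbidden patterns, and I expect it to be the main (though short) obstacle.

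In this case there is exactly one column $T_j$ of weight $2$. All other columns have weight $0$ or $3$, and every weight-$3$ column is $p(i)$ for some $i$ with $t_i=0$. Since $n\ge 3$, $T_j$ has at least one neighbouring column $T_{j'}$, where $j'=j\pm1$. By Lemma~\ref{patterns}(a), the patterns $20$ and $02$ are forbidden, and there is no other weight-$2$ column, so $t_{j'}=3$. Since $u=0$, the column $T_{j'}$ is $p(i)$ for some $i$ with $t_i=0$ and $|i-j'|=1$. As $i\ne j$, the column $T_i$ lies on the other side of $T_{j'}$. Hence the sequence contains the pattern $230$ or $032$, both of which are forbidden by Lemma~\ref{patterns}(a). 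So this case cannot occur.

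In every case $|D|\ge \frac32 n+\frac32$. Applying this to a $\gamma_1^{\mathrm L}(G_n)$-set proves the proposition. The bound is an integer because $n$ is odd.
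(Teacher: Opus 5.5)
Your proof is correct and follows the paper's argument essentially step for step. It uses the same injective map $p$ from Lemma~\ref{patterns}(b), the same identity $|D|-\frac32 n=n_1+\frac12 n_2+\frac32 u$, the same parity reduction, and the same contradiction via the forbidden patterns $230$/$032$ in the residual case $u=0$, $n_2=1$, $n_1=0$. Your handling of that last case looks at just one neighbouring column of the weight-$2$ column, which is slightly tidier than the paper's split according to whether that column sits at an end or in the interior.
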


\begin{proof}
Let $D$ be a $\gammaenaL(G_n)$-set, and let $(t_1,\dots,t_n)$ be the corresponding column weight sequence. Define
$$
\begin{aligned}
S&=\{i\in[n]: t_i\in\{0,1\}\},\qquad &O&=\{i\in[n]: t_i=1\},\\
T&=\{i\in[n]: t_i=2\},\qquad &H&=\{i\in[n]: t_i=3\}.
\end{aligned}
$$
By Lemma~\ref{patterns} \textup{(b)}, there exists an injective map
$p:S\to[n]$
such that $|p(i)-i|=1$ and $t_{p(i)}=3$ for all $i\in S$.
Let
$R=H\setminus p(S)$.
Since $p$ is injective, we have $|H|=|S|+|R|$.
Now
$$
|D|=\sum_{i=1}^n t_i = |O|+2|T|+3|H|
=|O|+2|T|+3|S|+3|R|,
$$
while
$$
n=|S|+|T|+|H|=2|S|+|T|+|R|.
$$
Therefore
$$
|D|-\frac{3}{2}n
=
|O|+\frac{1}{2}|T|+\frac{3}{2}|R|.
$$
Hence it suffices to prove that
$$
|O|+\frac{1}{2}|T|+\frac{3}{2}|R|\ge \frac{3}{2}.
$$
If $|R|\ge 1$, then we are done. Thus assume $R=\emptyset$. Then $|H|=|S|$, and so
$n=|S|+|T|+|H|=2|S|+|T|$.
Since $n$ is odd, it follows that $|T|$ is odd.
If $|T|\ge 3$, then $\frac12|T|\ge \frac32$, and again we are done.
Thus $|T|=1$.
We claim that then $|O|\ge 1$.

Suppose for a contradiction that $|O|=0$. Then every index in $S$ corresponds to an empty column.
Let $r$ be the unique index with $t_r=2$.
By Lemma~\ref{patterns}\textup{(a)}, the patterns $02$ and $20$ are forbidden. Hence every column adjacent to $T_r$ has weight $3$.
Since $R=\emptyset$, every full column belongs to $p(S)$, and hence is adjacent to some empty column.
If $r=1$, then $t_2=3$, and the empty column adjacent to $T_2$ must be $T_3$, yielding the forbidden pattern $230$.
If $r=n$, then $t_{n-1}=3$, and the empty column adjacent to $T_{n-1}$ must be $T_{n-2}$, yielding the forbidden pattern $032$.
If $1<r<n$, then both neighboring columns of $T_r$ have weight $3$. Since $R=\emptyset$ and $|O|=0$, each of these full columns must be adjacent to an empty column on its other side. Hence
$(t_{r-2},t_{r-1},t_r,t_{r+1},t_{r+2})=(0,3,2,3,0),$
which contains the forbidden patterns $032$ and $230$.

This contradiction proves that $|O|\ge 1$. Consequently, in the remaining case $|T|=1$ we have
$|O|+\frac12|T|\ge 1+\frac12=\frac32,$
which completes the proof.
\end{proof}

The above observations can now be summarized as follows.

\begin{theorem}
\label{nx3}
For every integer $n\ge 2$, we have $\gammaenaL(P_n\square P_3)
= 3\,\Big\lceil\frac{n}{2}\Big\rceil$.
\end{theorem}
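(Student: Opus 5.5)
The plan is to split according to the parity of $n$ and to combine results already proved. For even $n$ there is nothing new to do. Lemma~\ref{grid-even}, applied with $m=n$ and the second path of order $3$, gives $\gammaenaL(P_n\square P_3)=\frac{3n}{2}=3\lceil\frac n2\rceil$. This case includes $n=2$, which is why the statement may start at $n\ge 2$ while Proposition~\ref{glavni} needs $n\ge 3$.

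For odd $n\ge 3$ we prove two matching inequalities.

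\emph{Upper bound.} Apply Theorem~\ref{CP-bounds} with the factor $P_n$ and $k=1$. By Proposition~\ref{families}(i), $\gammaenaL(P_n)=\lceil \frac n2\rceil$. Hence
$$\gammaenaL(P_n\square P_3)\le \gammaenaL(P_n)\,n(P_3)=3\Big\lceil\frac n2\Big\rceil=\frac{3n+3}{2}.$$
Concretely, the set witnessing this is $D_{P_n}\times V(P_3)$, that is, the union of the full columns $T_i$ with $i$ odd.

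\emph{Lower bound.} Since $P_n\square P_3\cong G_n$, Proposition~\ref{glavni} gives $\gammaenaL(G_n)\ge \frac32 n+\frac32=3\lceil\frac n2\rceil$.

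The two inequalities coincide, which proves the statement for odd $n$. The only minor point to check is that the identification of $P_n\square P_3$ with the labelled graph $G_n$ matches the setup used in Proposition~\ref{glavni}; it does, since the columns $T_i$ are exactly the $P_3$-layers. The real difficulty, the lower bound for odd $n$, has already been handled in Proposition~\ref{glavni} together with Lemma~\ref{patterns}.
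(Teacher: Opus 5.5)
Your argument is the same as the paper's proof: the even case comes from Lemma~\ref{grid-even}. For odd $n$, the lower bound is Proposition~\ref{glavni} and the upper bound is Theorem~\ref{CP-bounds} combined with Proposition~\ref{families}(i).

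One correction: your ``concrete'' witness is wrong and should be replaced or deleted. For $n\ge 5$ the set $\{1,3,\dots,n\}$ is not a $1$-limited dominating set of $P_n$, because the vertex $3$ has both neighbors $2$ and $4$ outside it. For the same reason, the union of the odd columns $T_i$ fails in $G_n$: $a_3$ has the two outside neighbors $a_2$ and $a_4$. Equivalently, the weight sequence $3,0,3,0,3,\dots$ contains the pattern $030$ forbidden by Lemma~\ref{patterns}(a).

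The inequality itself is unaffected, since Theorem~\ref{CP-bounds} is applied to an actual $\gamma_1^{\mathrm L}(P_n)$-set. For odd $n$, one such set is $\{i\in[n]: i\equiv 0,1 \pmod 4\}\cup\{n\}$, which has size $\frac{n+1}{2}$.
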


\begin{proof}
For even $n$ the proof follows from Lemma \ref{grid-even}. For odd $n$, Proposition \ref{glavni} gives us $\gammaenaL(P_n\square P_3)
\geq 3\,\Big\lceil\frac{n}{2}\Big\rceil$, while the opposite inequality clearly follows from Theorem \ref{CP-bounds}.
\end{proof}

\subsection{Hypercubes}

Hypercubes form a particularly well-structured class in which $k$-limited domination can be studied explicitly: they are highly symmetric, regular, and arise recursively as iterated prisms.
In this subsection we first recall a useful reduction for regular graphs,
then discuss prisms, and finally apply these observations to $Q_d$.

\medskip

In \cite{podgorica1} the following was proved for regular graphs.

\begin{proposition}
\cite{podgorica1}
\label{d-regular}
Let $G$ be a connected $d$-regular graph of order $n\geq 3$ and let $k$ be an integer such that $1\leq k<d$. Then $\gammakL(G)=\gamma_{1,d-k}(G)$.
\end{proposition}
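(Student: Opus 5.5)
The plan is to prove the identity by a direct translation between the two definitions, since both parameters are minima over dominating sets of $G$. A $(1,t)$-dominating set, with $t=d-k$, is a dominating set $D$ in which every vertex $u\in D$ has at least $t$ neighbors in $D$. I will show that, for a $d$-regular graph, the family of $k$-limited dominating sets coincides exactly with the family of $(1,d-k)$-dominating sets. Equality of the minima then follows immediately.

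\emph{Key step: rewriting the local constraint.} For any $D\subseteq V(G)$ and any $u\in D$, the neighborhood $N_G(u)$ splits into neighbors inside $D$ and neighbors outside $D$. Since $u\notin N_G(u)$, regularity gives
$$|N_G(u)\cap D|+|N_G(u)\setminus D|=\deg_G(u)=d.$$
Hence the $k$-limited constraint $|N_G(u)\setminus D|\le k$ holds if and only if $|N_G(u)\cap D|\ge d-k$, which is exactly the $(1,d-k)$ condition at $u$. The domination requirement on vertices outside $D$ is identical in both definitions. Therefore a set $D$ is $k$-limited dominating if and only if it is $(1,d-k)$-dominating, and taking minimum cardinalities gives $\gammakL(G)=\gamma_{1,d-k}(G)$.

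\emph{Remaining checks.} I must verify that both parameters are well defined for the stated range of $k$. For $\gammakL$ this is immediate, since $V(G)$ is a $k$-limited dominating set. For $\gamma_{1,d-k}$, the set $V(G)$ works because $1\le d-k\le d-1<d$ and every vertex has $d$ neighbors in $V(G)$. The hypotheses $k\ge 1$, connectivity and $n\ge 3$ play no role in the equivalence itself. They only ensure that $t=d-k$ lies in the range $1\le t\le d-1$ where $\gamma_{1,t}$ is usually defined.

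The argument is essentially definitional, so I expect no real obstacle. The only point needing care is to use the same convention for $(1,t)$-domination as the cited works, namely that $t$ counts neighbors \emph{in} $D$ rather than closed neighborhoods. If a closed-neighborhood convention were intended, the identity would shift by one, so I would state the convention explicitly before giving the one-line computation above.
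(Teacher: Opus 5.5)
Your proof is correct and complete. The paper's introduction states the convention you were worried about: each vertex of $D$ must have at least $t$ neighbors in $D$, and this is also what gives $\gamma_{1,1}=\gamma_t$ as used later. Under that convention, the identity $|N_G(u)\cap D|+|N_G(u)\setminus D|=d$ shows that $k$-limited dominating sets and $(1,d-k)$-dominating sets of a $d$-regular graph are the same family. The paper does not reprove the result but cites \cite{podgorica1}, so there is no proof of its own to compare against. Your definitional equivalence is the natural justification, and you are right that connectivity and $n\ge 3$ play no role in it.
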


Before turning specifically to hypercubes, let us briefly discuss what can be said about $k$-limited domination on general prisms.
Recall that the \emph{prism} over a graph $G$ is the Cartesian product $G\square K_2$.
Several domination parameters behave particularly well on prisms. In particular, it was shown in \cite{azar}, and subsequently reproved by different arguments in \cite{GodHen18} and \cite{at-dmgt}, that if $G$ is a bipartite graph, then $\gamma_t(G\square K_2)=2\gamma(G)$.

Now let $G$ be a connected $d$-regular bipartite graph. Then $G\square K_2$ is $(d+1)$-regular, and Proposition~\ref{d-regular} applied to $G\square K_2$ with $k=d$ gives
$\gamma_d^{\mathrm L}(G\square K_2)=\gamma_{1,1}(G\square K_2)=\gamma_t(G\square K_2)$.
Combining this with the above identity yields the following.

\begin{corollary}
\label{prism-regular}
Let $G$ be a connected $d$-regular bipartite graph with $d\ge 2$. Then
$$\gamma_d^{\mathrm L}(G\square K_2) = 2\,\gamma(G).$$
\end{corollary}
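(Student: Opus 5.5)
The plan is to chain three facts in the way the paragraph preceding the statement indicates, checking each hypothesis along the way.

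First, I will verify that $G\square K_2$ satisfies the hypotheses of Proposition~\ref{d-regular}. Since $G$ is connected and $d$-regular, every vertex $(g,h)$ of $G\square K_2$ has $d$ neighbors in its $G$-layer and exactly one in its $K_2$-fiber. Hence $G\square K_2$ is connected and $(d+1)$-regular. Its order is $2n(G)\ge 2(d+1)\ge 6\ge 3$. Taking $k=d$, we have $1\le d<d+1$, so the proposition applies and gives
$\gamma_d^{\mathrm L}(G\square K_2)=\gamma_{1,(d+1)-d}(G\square K_2)=\gamma_{1,1}(G\square K_2)$.

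Second, I will identify $\gamma_{1,1}$ with $\gamma_t$. A $(1,1)$-dominating set is a set $D$ in which every vertex outside $D$ has a neighbor in $D$, and every vertex inside $D$ has at least one neighbor in $D$. Together these say exactly that every vertex of the graph has a neighbor in $D$, which is the definition of a total dominating set. The identity holds for any graph without isolated vertices, which $G\square K_2$ is. So $\gamma_{1,1}(G\square K_2)=\gamma_t(G\square K_2)$.

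Third, since $G$ is bipartite, I will invoke the result of \cite{azar} (reproved in \cite{GodHen18,at-dmgt}) that $\gamma_t(G\square K_2)=2\gamma(G)$, and combine the three equalities.

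The main obstacle is the second step. It depends on the precise definition of $(1,t)$-domination from \cite{fakhran,favaron}, in particular that the inside condition is ``at least $t$ neighbors in $D$'' and not something stronger. I will state that check explicitly. If a self-contained argument is preferred, I could instead verify $\gamma_d^{\mathrm L}(G\square K_2)=\gamma_t(G\square K_2)$ directly. Each vertex of $D$ has $d+1$ neighbors, so it has at most $d$ of them outside $D$ if and only if it has at least one inside $D$; this is exactly the total-domination condition.
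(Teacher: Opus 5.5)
Your proposal is correct and follows the paper's own argument: apply Proposition~\ref{d-regular} to the connected $(d+1)$-regular prism $G\square K_2$ with $k=d$, identify $\gamma_{1,1}$ with $\gamma_t$, and then invoke the bipartite prism identity $\gamma_t(G\square K_2)=2\gamma(G)$ from \cite{azar}. Beyond what the paper writes, you also verify the hypotheses (connectivity, order at least $3$, $1\le d<d+1$) and note that a vertex of degree $d+1$ has at most $d$ neighbors outside $D$ exactly when it has a neighbor in $D$; both checks are sound.
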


Hypercubes fit naturally into this picture as for every $d\ge 1$ we have the recursive representation
$Q_{d+1}=Q_d\square K_2$,
so that $Q_{d+1}$ is precisely the prism over $Q_d$. Since $Q_d$ is $d$-regular and bipartite, Corollary~\ref{prism-regular}
immediately yields the following relationship.

\begin{corollary}
\label{total-hip}
Let $d\ge 1$ be an integer. Then
$\gamma_d^{\mathrm L}(Q_{d+1}) =\gt(Q_{d+1})= 2\,\gamma(Q_d)$.
\end{corollary}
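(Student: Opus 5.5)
The statement is Corollary~\ref{total-hip}, and the plan is to derive it as a direct specialization of Corollary~\ref{prism-regular} to $G=Q_d$, together with the chain of identities that preceded that corollary. First I record the facts needed to apply it. The recursive representation $Q_{d+1}=Q_d\square K_2$ holds by definition. Moreover, $Q_d$ is connected, $d$-regular and bipartite: the parity of the number of ones in a binary word gives the bipartition.

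For $d\ge 2$, Corollary~\ref{prism-regular} applied to $G=Q_d$ gives $\gamma_d^{\mathrm L}(Q_{d+1})=2\gamma(Q_d)$ immediately. For the middle equality I repeat the argument preceding Corollary~\ref{prism-regular}. The graph $Q_{d+1}$ is connected and $(d+1)$-regular of order $2^{d+1}\ge 3$, and $1\le d<d+1$. Hence Proposition~\ref{d-regular} applies with $k=d$ and yields
\[
\gamma_d^{\mathrm L}(Q_{d+1})=\gamma_{1,1}(Q_{d+1}).
\]
Next, $\gamma_{1,1}=\gamma_t$ by definition. A $(1,1)$-dominating set dominates every outside vertex, and every vertex inside it has a neighbor in the set, which is exactly a total dominating set. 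This gives $\gamma_d^{\mathrm L}(Q_{d+1})=\gt(Q_{d+1})$. The identity $\gt(Q_d\square K_2)=2\gamma(Q_d)$ for bipartite $Q_d$, cited from \cite{azar}, then closes the chain.

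The only step needing separate attention is the boundary case $d=1$, since Corollary~\ref{prism-regular} was stated for $d\ge 2$. Here I would verify directly on $Q_2=C_4$:
\begin{itemize}
\item $\gamma(Q_1)=\gamma(K_2)=1$, so the target value is $2$;
\item $\gt(C_4)=2$, attained by two adjacent vertices;
\item $\gamma_1^{\mathrm L}(C_4)=2$: the lower bound follows from Theorem~\ref{bounds} ($\lceil 4/2\rceil=2$), and the same two adjacent vertices each have exactly one outside neighbor, so they form a $1$-limited dominating set.
\end{itemize}
Alternatively, Proposition~\ref{d-regular} applies anyway, since $C_4$ is $2$-regular and connected of order $4\ge 3$ with $k=1<2$, and the prism identity for bipartite $K_2$ still holds.

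This boundary check, together with confirming that the hypotheses of Proposition~\ref{d-regular} hold (in particular $k=d<d+1$), is the only place where anything beyond quoting is needed. I do not expect a real obstacle.
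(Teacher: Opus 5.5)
Your proposal is correct and follows the paper's route: you specialize Corollary~\ref{prism-regular} to $G=Q_d$, which amounts to Proposition~\ref{d-regular} with $k=d$, the identification $\gamma_{1,1}=\gamma_t$, and the prism identity of \cite{azar}. Your separate check of $d=1$ on $Q_2=C_4$ is a genuine improvement. The paper states Corollary~\ref{prism-regular} only for $d\ge 2$ but claims the hypercube result for all $d\ge 1$ without comment, and your check (or your remark that Proposition~\ref{d-regular} applies to $C_4$ directly) closes that small gap.
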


For hypercubes, exact formulas for the classical domination number are known only for small values (see Table~\ref{tabela3}) and for two distinguished infinite families. Namely, if $d=2^r-1$, then
$\gamma(Q_d)=2^{\,2^r-r-1}$, while for $d=2^r$ one has $\gamma(Q_d)=2^{\,2^r-r}$.
The first formula follows from the existence of perfect codes in $Q_{2^r-1}$, see \cite{HarLiv}, while the second is due to van Wee \cite{Wee}. 
Consequently, Corollary \ref{total-hip} immediately yields exact values for $\gamma^{\mathrm L}_{d-1}(Q_d)$ in the corresponding families.

\begin{corollary}
Let $r\ge 1$ be an integer.
If $d=2^r$, then $\gamma^{\mathrm L}_{d-1}(Q_d)=2^{\,2^r-r}$, and 
if $d=2^r+1$, then $\gamma^{\mathrm L}_{d-1}(Q_d)=2^{\,2^r-r+1}$.
\end{corollary}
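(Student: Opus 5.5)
The plan is a direct substitution into Corollary~\ref{total-hip}, which states that $\gamma^{\mathrm L}_{d}(Q_{d+1})=2\,\gamma(Q_d)$ for every $d\ge 1$. Writing $D=d+1$, this becomes $\gamma^{\mathrm L}_{D-1}(Q_D)=2\,\gamma(Q_{D-1})$ for every $D\ge 2$. Both claims of the statement therefore reduce to knowing $\gamma(Q_{D-1})$ exactly, and for that I will use the two known infinite families: $\gamma(Q_{2^r-1})=2^{2^r-r-1}$, from perfect codes \cite{HarLiv}, and $\gamma(Q_{2^r})=2^{2^r-r}$, due to van Wee \cite{Wee}.

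For the first claim, let $d=2^r$ with $r\ge 1$, so that $d\ge 2$ and the corollary applies. Then $d-1=2^r-1$, and the perfect-code formula gives $\gamma(Q_{d-1})=2^{2^r-r-1}$. Hence
$\gamma^{\mathrm L}_{d-1}(Q_d)=2\cdot 2^{2^r-r-1}=2^{2^r-r}$.
For the second claim, let $d=2^r+1$. Then $d-1=2^r$, and van Wee's formula gives $\gamma(Q_{2^r})=2^{2^r-r}$, so
$\gamma^{\mathrm L}_{d-1}(Q_d)=2^{2^r-r+1}$.

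The only step needing care is that the hypotheses behind Corollary~\ref{total-hip} actually hold. That corollary comes from Corollary~\ref{prism-regular}, which requires the factor to be regular of degree at least $2$. This factor is $Q_{d-1}$ in our indexing, so we need $d-1\ge 2$.

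In the first family with $r=1$, we have $d=2$ and the factor is $Q_1=K_2$, which is only $1$-regular. In that case I will check the value directly. $Q_2=C_4$ and $\gamma_1^{\mathrm L}(C_4)=2$, since two adjacent vertices form a $1$-limited dominating set and one vertex cannot dominate $C_4$. This agrees with $2^{2-1}=2$.

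In the second family with $r=1$, we have $d=3$ and the factor is $Q_2$, which is $2$-regular, so the corollary applies. As a sanity check, $\gamma(Q_2)=2$ gives $\gamma^{\mathrm L}_2(Q_3)=4=2^{2}$, matching the formula.

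For all larger $r$, the factor has degree at least $3$ and the corollary applies with no further work. Apart from this boundary case, the proof is just arithmetic with exponents.
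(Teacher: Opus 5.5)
Your proposal is correct and matches the paper's argument: it substitutes the perfect-code value $\gamma(Q_{2^r-1})=2^{2^r-r-1}$ and van Wee's value $\gamma(Q_{2^r})=2^{2^r-r}$ into Corollary~\ref{total-hip}. Your direct check of the case $d=2$ (where $\gamma^{\mathrm L}_1(C_4)=2$) is a sensible extra precaution. The paper glosses over it: there the factor $Q_1$ is only $1$-regular, so Corollary~\ref{prism-regular} as stated does not literally cover it.
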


While the previous corollaries concern the extremal case $k=d$,
the opposite end $k=1$ leads to a simpler behavior. In fact, optimal $1$-limited dominating sets in $Q_d$ occupy
exactly one half of the vertices. We include a short proof, which also illustrates how the general Cartesian product bounds
become tight on hypercubes.

\begin{proposition}
\label{1Lkock}
For every integer $d\ge 1$, 
$\gammaenaL(Q_d) = 2^{d-1}$.
\end{proposition}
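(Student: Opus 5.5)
The plan is to sandwich $\gammaenaL(Q_d)$ between two copies of $2^{d-1}$. The lower bound comes from the counting argument of Theorem~\ref{CP-bounds}, and the upper bound comes from the layer construction of the same theorem applied to $Q_d=Q_{d-1}\square K_2$.

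\textbf{Lower bound.} For every $d\ge 1$ and any graph $G$, every $1$-limited dominating set $D$ satisfies $|D|\ge \lceil n(G)/2\rceil$. Indeed, the argument in the proof of Theorem~\ref{CP-bounds} does not use the product structure: each $x\in D$ together with its at most one neighbor outside $D$ covers at most two vertices, and these sets cover $V(G)$. Applied to $Q_d$, this gives $\gammaenaL(Q_d)\ge 2^d/2=2^{d-1}$. For $d\ge 2$ one can equivalently invoke the lower bound of Theorem~\ref{CP-bounds} directly with $G=Q_{d-1}$, $H=K_2$ and $k=1$.

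\textbf{Upper bound.} For $d=1$, $Q_1=K_2$, and a single vertex is a $1$-limited dominating set, so the value is $1=2^0$. For $d\ge 2$, write $Q_d=K_2\square Q_{d-1}$. The upper bound of Theorem~\ref{CP-bounds} gives $\gammaenaL(Q_d)\le \gammaenaL(K_2)\,n(Q_{d-1})=1\cdot 2^{d-1}$. Here $\gammaenaL(K_2)=1$ is immediate: one endpoint dominates the other and has exactly one outside neighbor. Concretely, the set $D$ consists of all vertices whose first coordinate is $0$. Each vertex of $D$ has exactly one neighbor outside $D$, and $D$ dominates $Q_d$.

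\textbf{Main obstacle.} The only subtle point is that Theorem~\ref{bounds} and Proposition~\ref{families} require $k<\Delta$ or $n\ge 3$, which excludes $K_2$. We therefore verify $\gammaenaL(K_2)=1$ directly, using the extended definition for $k\ge\Delta$ from Observation~\ref{monotone}, rather than citing those results. Combining the two bounds gives $\gammaenaL(Q_d)=2^{d-1}$.
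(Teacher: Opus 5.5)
Your proof is correct and is essentially the paper's argument: a counting lower bound of $\lceil 2^d/2\rceil$, and for the upper bound the set of vertices with first coordinate $0$, which is exactly the layer construction of Theorem~\ref{CP-bounds} with factor $K_2$. The only difference is that the paper cites Theorem~\ref{bounds} for the lower bound, and its hypotheses ($n\ge 3$, $k<\Delta$) fail for $d=1$; your counting argument needs no hypotheses and so covers that case as well.
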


\begin{proof}
Since $Q_d$ has $2^d$ vertices, the lower bound in Theorem~\ref{bounds} gives $
\gammaenaL(Q_d)\ge \left\lceil \frac{2^d}{2}\right\rceil = 2^{d-1}$.

To prove the opposite inequality, recall that for an integer $d\ge 1$, $Q_d$ can be seen as the graph with 
$V(Q_d)=\{(x_1,\dots,x_d): x_i\in\{0,1\}\text{ for all }i\in[d]\}$,
where two vertices are adjacent if and only if they differ in exactly one coordinate. Let $
D=\{(x_1,\dots,x_d)\in\{0,1\}^d:\ x_1=0\}$. Clearly $|D|=2^{d-1}$. We claim that $D$ is a $1$-limited dominating set of $Q_d$. Take an arbitrary vertex $y=(y_1,\dots,y_d)\notin D$, so $y_1=1$. Then the vertex
$x=(0,y_2,\dots,y_d)\in D$
differs from $y$ in exactly one coordinate, hence $x$ and $y$ are adjacent $Q_d$. Therefore $D$ dominates every vertex outside $D$. To verify the $1$-limited constraint, let $x\in D$. Among the $d$ neighbors of $x$, exactly one has first coordinate $1$, while the remaining $d-1$ neighbors have the first coordinate equal to $0$ and thus also lie in $D$.
Consequently, $x$ dominates exactly one vertex of $V(Q_d)\setminus D$, and the $1$-limited constraint holds.
Thus $\gammaenaL(Q_d)\le |D|=2^{d-1}$, and together with the lower bound we conclude
$\gammaenaL(Q_d)=2^{d-1}$.
\end{proof}

\section{Conclusion}
\label{sec:conclusion}

The notion of $k$-limited domination was introduced in \cite{podgorica1} as a restricted variant of classical domination, motivated by situations in which dominating vertices cannot serve arbitrarily many vertices outside the dominating set. The present paper shows that the parameter is not only well motivated, but also structurally rich and mathematically nontrivial.

This is reflected already in the computational complexity of the problem. While we proved that \textsc{$k$-Limited Dominating set} is $\mathsf{NP}$-complete for every fixed integer $k\ge 2$, the extremal case $k=1$ remains unresolved. Our reduction relies in an essential way on the assumption $k\ge 2$, and the corresponding forcing mechanism breaks down for $k=1$. This suggests that the case of $1$-limited domination requires a different approach, and leads naturally to the following question.

\begin{problem}
Is \textsc{$1$-Limited Dominating Set} $\mathsf{NP}$-complete?
\end{problem}

Another direction pursued in this paper concerns the behavior of $k$-limited domination under the Cartesian product. We established general lower and upper bounds for $\gammakL(G\square H)$, proved that both are sharp, and derived exact values for several natural graph families. In particular, for $k=1$ all exact values obtained in this paper attain the general upper bound from Theorem~\ref{CP-bounds}. This is not the typical behavior for arbitrary $k$: for example,
$\gamma^{\mathrm L}_2(K_2\square K_{1,3})=3$,
whereas Theorem~\ref{CP-bounds} yields only the upper bound $4$. Nevertheless, our results suggest that the extremal case $k=1$ may be special, which leads to the following question.

\begin{problem}
Is it true that for all connected graphs $G$ and $H$,
$$
\gamma^{\mathrm L}_1(G\square H)=
\min\big\{\gamma^{\mathrm L}_1(G)\,n(H),\,
\gamma^{\mathrm L}_1(H)\,n(G)\big\}\, ?
$$
\end{problem}

A particularly natural case to examine is the Cartesian product of two paths. In this paper we proved that
$\gammaenaL(P_m\square P_n)=\frac{mn}{2}$
whenever at least one of $m$ and $n$ is even, and that
$\gammaenaL(P_n\square P_3)=3\lceil \frac{n}{2}\rceil$.
This leaves open the case when $m$ and $n$ are both odd, and leads to the following question.

\begin{problem}
Is it true that for all integers $1\le m\le n$, $
\gamma^{\mathrm L}_1(P_m\square P_n)=
m\lceil \frac{n}{2}\rceil$?

\end{problem}

We conclude by discussing $k$-limited domination in hypercubes. For the classical domination number $\gamma(Q_d)$, a useful overview of the current state of knowledge is provided by the recent paper of Wu and Chen \cite{Wu}, whose appendix collects the best known lower and upper bounds for dimensions up to $33$. In addition, for dimensions divisible by $6$, their main theorem improves some of the previously best known lower bounds. The gray cells in Table~\ref{tabela3} reflect the corresponding part of this picture up to dimension $17$, while incorporating the mentioned improvement in the case $d=12$. The remaining entries are then arranged around these values in the setting of $k$-limited domination. Since for $k\ge d$ we have $\gamma_k^{\mathrm L}(Q_d)=\gamma(Q_d)$, an asterisk indicates that the corresponding value coincides with the one in the gray cell above it. The entries immediately above the gray cells represent the values of $\gamma_{d-1}^{\mathrm L}(Q_d)$, which, by Corollary~\ref{total-hip}, are equal to the total domination number $\gamma_t(Q_d)$, and therefore to twice the domination number of the $(d-1)$-dimensional hypercube. The values in the first row follow from Proposition~\ref{1Lkock}.

For $d$-regular graphs, and in particular for hypercubes $Q_d$, the identity $\gamma_k^{\mathrm L}(Q_d)=\gamma_{1,d-k}(Q_d)$,
arising from Proposition~\ref{d-regular}, shows that the study of $k$-limited domination is naturally equivalent to the study of $(1,t)$-domination. As already noted, the case $t=1$ corresponds to total domination, while the cases $t\ge 2$ seem to remain largely open. In particular, the parameter $\gamma^{\mathrm L}_{d-2}(Q_d)$, equivalent to $(1,2)$-domination, appears to be a natural next direction for further research, especially in light of the recent interest in this parameter \cite{rija,fakhran,flower}.

\begin{table}[ht]
\centering
\caption{Known values and bounds for $\gammakL(Q_d)$.}
\label{tabela3}
\tiny
\setlength{\tabcolsep}{3.5pt}
\renewcommand{\arraystretch}{1.12}
\begin{tabular}{|c|*{17}{c|}}
\hline
 & $1$ & $2$ & $3$ & $4$ & $5$ & $6$ & $7$ & $8$ & $9$ & $10$ & $11$ & $12$ & $13$ & $14$ & $15$ & $16$ & $17$ \\
\hline
$\gamma^{\mathrm L}_1(Q_d)$
& \cellcolor{GoalGray}{$1$} & {$2$} & {$4$} & {$8$} & {$16$} & {$32$} & {$64$} & {$128$} & {$256$} & {$512$} & {$1024$} & {$2048$} & {$4096$} & {$8192$} & {$16384$} & {$32768$} & {$65536$} \\
\hline
$\gamma^{\mathrm L}_2(Q_d)$
& $*$ & \cellcolor{GoalGray}{$2$} & {$4$} &  &  &  &  &  &  &  &  &  &  &  &  &  &  \\
\hline
$\gamma^{\mathrm L}_3(Q_d)$
& $*$ & $*$ & \cellcolor{GoalGray}{$2$} & {$4$} &  &  &  &  &  &  &  &  &  &  &  &  &  \\
\hline
$\gamma^{\mathrm L}_4(Q_d)$
& $*$ & $*$ & $*$ & \cellcolor{GoalGray}{$4$} & {$8$} &  &  &  &  &  &  &  &  &  &  &  &  \\
\hline
$\gamma^{\mathrm L}_5(Q_d)$
& $*$ & $*$ & $*$ & $*$ & \cellcolor{GoalGray}{$7$} & {$14$} &  &  &  &  &  &  &  &  &  &  &  \\
\hline
$\gamma^{\mathrm L}_6(Q_d)$
& $*$ & $*$ & $*$ & $*$ & $*$ & \cellcolor{GoalGray}{$12$} & {$24$} &  &  &  &  &  &  &  &  &  &  \\
\hline
$\gamma^{\mathrm L}_7(Q_d)$
& $*$ & $*$ & $*$ & $*$ & $*$ & $*$ & \cellcolor{GoalGray}{$16$} & {$32$} &  &  &  &  &  &  &  &  &  \\
\hline
$\gamma^{\mathrm L}_8(Q_d)$
& $*$ & $*$ & $*$ & $*$ & $*$ & $*$ & $*$ & \cellcolor{GoalGray}{$32$} & {$64$} &  &  &  &  &  &  &  &  \\
\hline
$\gamma^{\mathrm L}_9(Q_d)$
& $*$ & $*$ & $*$ & $*$ & $*$ & $*$ & $*$ & $*$ & \cellcolor{GoalGray}{$62$} & {$124$} &  &  &  &  &  &  &  \\
\hline
$\gamma^{\mathrm L}_{10}(Q_d)$
& $*$ & $*$ & $*$ & $*$ & $*$ & $*$ & $*$ & $*$ & $*$ & \cellcolor{GoalGray}{$107$--$120$} & {$214$--$240$} &  &  &  &  &  &  \\
\hline
$\gamma^{\mathrm L}_{11}(Q_d)$
& $*$ & $*$ & $*$ & $*$ & $*$ & $*$ & $*$ & $*$ & $*$ & $*$ & \cellcolor{GoalGray}{180--192} & 360--384 &  &  &  &  &  \\
\hline
$\gamma^{\mathrm L}_{12}(Q_d)$
& $*$ & $*$ & $*$ & $*$ & $*$ & $*$ & $*$ & $*$ & $*$ & $*$ & $*$ & \cellcolor{GoalGray}{348--380} & 696--760  &  &  &  &  \\
\hline
$\gamma^{\mathrm L}_{13}(Q_d)$
& $*$ & $*$ & $*$ & $*$ & $*$ & $*$ & $*$ & $*$ & $*$ & $*$ & $*$ & $*$ & \cellcolor{GoalGray}{598--704} & 1196--1408 &  &  &  \\
\hline
$\gamma^{\mathrm L}_{14}(Q_d)$
& $*$ & $*$ & $*$ & $*$ & $*$ & $*$ & $*$ & $*$ & $*$ & $*$ & $*$ & $*$ & $*$ & \cellcolor{GoalGray}{1172--1408} & 2344--2816 &  &  \\
\hline
$\gamma^{\mathrm L}_{15}(Q_d)$
& $*$ & $*$ & $*$ & $*$ & $*$ & $*$ & $*$ & $*$ & $*$ & $*$ & $*$ & $*$ & $*$ & $*$ & \cellcolor{GoalGray}{$2048$} & {$4096$} &  \\
\hline
$\gamma^{\mathrm L}_{16}(Q_d)$
& $*$ & $*$ & $*$ & $*$ & $*$ & $*$ & $*$ & $*$ & $*$ & $*$ & $*$ & $*$ & $*$ & $*$ & $*$ & \cellcolor{GoalGray}{$4096$} & {$8192$} \\
\hline
$\gamma^{\mathrm L}_{17}(Q_d)$
& $*$ & $*$ & $*$ & $*$ & $*$ & $*$ & $*$ & $*$ & $*$ & $*$ & $*$ & $*$ & $*$ & $*$ & $*$ & $*$ & \cellcolor{GoalGray}{7419--8192} \\
\hline
\end{tabular}
\end{table}

From this perspective, filling the blank cells of Table~\ref{tabela3} is already an interesting problem in its own right. More specifically, it would be desirable to determine further exact values of $\gamma_k^{\mathrm L}(Q_d)$ and to gain a more systematic understanding of how these parameters behave as $k$ tends to $d$.

%%%%%%%%%%%%%%%%%%%%%%%
%%%%%%%%%%%%%%%%%%%%%%%

\vskip 1pc \noindent{\bf Acknowledgments.} 
This work has been supported by the European Commission's Horizon Europe Research and Innovation programme through the Marie Sk\l{}odowska-Curie Actions Staff Exchanges (MSCA-SE) under Grant Agreement no. 101182819 (COVER: (C)ombinatorial (O)ptimization for (V)ersatile Applications to (E)merging u(R)ban Problems).
The researcher was partially supported by Slovenian research agency ARIS, program no.~P1--0297 and project no.~J1--70016.

%%%%%%%%%%%%%%%%%%%%%%%%%%%%


\begin{thebibliography}{99}

\bibitem{azar}
J.~Azarija, M.~A.~Henning, S. Klav\v{z}ar,
(Total) Domination in Prisms Electron. J. Comb., 24(1), 2017, \#P1.19.
\url{https://doi.org/10.37236/6288}

\bibitem{podgorica1}
D.~Bo\v{z}ovi\'c, G.~Radi\'c, \v{Z}.~Kovijani\'c-Vuki\'cevi\'c, A.~Tepeh,
On $k$-limited domination in graphs,
submitted.

\bibitem{BB}
B.~Bre\v{s}ar, P.~Dorbec, W.~Goddard, B.~L.~Hartnell, M.~A.~Henning, S.~Klav\v{z}ar, D.~F.~Rall,
Vizing's conjecture: a survey and recent results,
J. Graph Theory 69 (2012) 46--76.
\url{https://doi.org/10.1002/jgt.20565}

\bibitem{Chellali}
M.~Chellali, T.~W.~Haynes, S.~T.~Hedetniemi, A.~McRae,
$[1,2]$-sets in graphs,
Discrete Appl. Math. 161 (2013) 2885--2893.
\url{https://doi.org/10.1016/j.dam.2013.06.012}

\bibitem{rija}
R.~Erve\v s, A.~Tepeh, Induced cycles vertex number and $(1,2)$-domination in cubic graphs,  \emph{Appl. Math. Comput.}  {\bf 510} (2025), 129700,
\url{https://doi.org/10.1016/j.amc.2025.129700}.

\bibitem{fakhran}
M. H. Fakhran, A. A. Gorzin, M. A. Henning, A. Jafari and R. Touserkani, 
On $(1,2)$-domination in cubic graphs,
\emph{Discrete Math.} {\bf 344} (2021) 112546,
\url{https://doi.org/10.1016/j.disc.2021.112546}.

\bibitem{favaron}
O.~Favaron, M.~A.~Henning, J.~Puech, D.~Rautenbach,
On domination and annihilation in graphs with claw-free blocks,
Discrete Math. 231 (2001) 143--151.
\url{https://doi.org/10.1016/S0012-365X(00)00313-7}

\bibitem{FinkJacobson}
J.~F.~Fink, M.~S.~Jacobson,
$n$-domination in graphs,
in: Y.~Alavi, G.~Chartrand, O.~R.~Oellermann, A.~J.~Schwenk (Eds.),
Graph Theory with Applications to Algorithms and Computer Science,
Wiley, New York, 1985, pp.~283--300.


\bibitem{garey}
M.~R.~Garey, D.~S.~Johnson, Computers and Intractability; A Guide to the
Theory of NP-Completeness, W. H. Freeman \& Co., New York, 1979. 
\url{https://doi.org/10.1137/1024022}


\bibitem{GodHen18}
W.~Goddard, M.~A.~Henning,
A note on domination and total domination in prisms,
J. Comb. Optim. 35 (2018) 14--20.
\url{https://doi.org/10.1007/s10878-017-0150-0}

\bibitem{HarLiv}
F.~Harary, M.~Livingston,
Independent domination in hypercubes,
Appl. Math. Lett. 6 (1993) 27--28.
\url{https://doi.org/10.1016/0893-9659(93)90027-K}

\bibitem{HHH1}
T.~W.~Haynes, S.~T.~Hedetniemi, M.~A.~Henning (Eds.), Topics in Domination in Graphs, Developments in Mathematics, Vol. 64, Springer, Cham, 2020.
\url{https://doi.org/10.1007/978-3-030-51117-3}

\bibitem{HHH2}
T.~W.~Haynes, S.~T.~Hedetniemi, M.~A.~Henning (Eds.), Structures of Domination in Graphs, Developments in Mathematics, Vol. 66, Springer, Cham, 2021.
\url{https://doi.org/10.1007/978-3-030-58892-2}

\bibitem{HHH3}
T.~W.~Haynes, S.~T.~Hedetniemi, M.~A.~Henning, Domination in Graphs: Core Concepts, Series: Springer Monographs in Mathematics, Springer, Cham, 2023. 
\url{https://doi.org/10.1007/978-3-031-09496-5}

\bibitem{Peterin}
N.~Ghareghani, I.~Peterin, P.~Sharifani,
$[1,k]$-domination number of lexicographic products of graphs,
Bull. Malays. Math. Sci. Soc. 44 (2021) 375--392.
\url{https://doi.org/10.1007/s40840-020-00957-0}

\bibitem{flower}
G.~Radi\'c, G.~Radi\'c, A.~Tepeh, 
$(1,2)$-domination of Flower snarks, submitted.


\bibitem{at-dmgt}
A.~Tepeh,
Total domination in generalized prisms and a new domination invariant,
Discuss. Math. Graph Theory 41 (2021) 1165--1178.
\url{https://api.semanticscholar.org/CorpusID:209476345}

\bibitem{Wee}
G.~J.~M. van Wee, 
Improved sphere bounds on the covering radius of codes.
IEEE
Trans. Inform. Theory 34 (1988) 237--245.
\url{https://doi.org/10.1109/18.2632}

\bibitem{Wu}
Y.-S.~Wu, J.-Y.~Chen,
Improved lower bounds on the domination number of hypercubes and binary codes with covering radius one,
Discrete Math. 347 (2024) 113752.
\url{https://doi.org/10.1016/j.disc.2023.113752}



\end{thebibliography}
\end{document}